\providecommand\@enum@widestlabel{7}
\newtheorem{lemma}{Lemma}[section]
\newtheorem{theorem}[lemma]{Theorem}
\newtheorem{corollary}[lemma]{Corollary}
\newtheorem{proposition}[lemma]{Proposition}
\newtheorem{conjecture}[lemma]{Conjecture}
\newtheorem{claim}[lemma]{Claim}
\newtheorem*{claim*}{Claim}
\theoremstyle{definition}
\newtheorem{remark}[lemma]{Remark}
\newtheorem{example}[lemma]{Example}
\newtheorem{definition}[lemma]{Definition}
\renewcommand{\theequation}%
{\arabic{section}.\arabic{lemma}.\arabic{equation}}
\newcommand{\CC}{\ensuremath{\mathbb{C}}} 
\newcommand{\NN}{\ensuremath{\mathbb{N}}} 
\newcommand{\PP}{\ensuremath{\mathbb{P}}} 
\newcommand{\QQ}{\ensuremath{\mathbb{Q}}} 
\newcommand{\RR}{\ensuremath{\mathbb{R}}} 
\newcommand{\ZZ}{\ensuremath{\mathbb{Z}}} 
\newcommand{\sI}{\ensuremath{\kern -1pt \mathscr{I}\kern -2pt}} 
\newcommand{\sJ}{\ensuremath{\kern -2pt \mathscr{J}\kern -2pt}} 
\newcommand{\sO}{\ensuremath{\mathscr{O}}}
\renewcommand{\geq}{\geqslant}
\renewcommand{\leq}{\leqslant}
\DeclareMathOperator{\multlt}{mult}
\DeclareMathOperator{\Supp}{Supp}
\DeclareMathOperator{\mld}{mld}
\DeclareMathOperator{\lct}{lct}
\DeclareMathOperator{\wt}{wt}
\DeclareMathOperator{\modo}{mod}
\DeclareMathOperator{\slope}{slope}
\DeclareMathOperator{\mult}{mult}
\DeclareMathOperator{\rank}{rank}
\DeclareMathOperator{\centerr}{center}
\begin{document}

\title[]{Optimal bound for singularities on Fano type fibrations of relative dimension one}

\author{Bingyi Chen}
\address{Bingyi Chen, Department of Mathematics,
Sun Yat-sen University,
Guangzhou, 510275, P. R. China.}
\email{chenby253@mail.sysu.edu.cn, chenby16@tsinghua.org.cn}

\begin{abstract}
Let $\pi:X\rightarrow Z$ be a Fano type fibration with $\dim X-\dim Z=d$ and let $(X,B)$ be an $\epsilon$-lc pair with $K_X+B\sim_{\RR} 0/Z$.  The canonical bundle formula gives $(Z,B_Z+M_Z)$ where $B_Z$ is the discriminant divisor and $M_Z$ is the moduli divisor which is determined up to $\RR$-linear equivalence. Shokurov conjectured that one can choose $M_Z\geq 0$ such that $(Z,B_Z+M_Z)$ is $\delta$-lc where $\delta$ only depends on $d,\epsilon$. Very recently, this conjecture was proved by Birkar \cite{Bir23}. For $d=1$ and $\epsilon=1$, Han, Jiang and Luo \cite{HJL22} gave  the optimal value of $\delta=1/2$. In this paper, we give the optimal value of $\delta$ for $d=1$ and arbitrary $0<\epsilon\leq 1$.
\end{abstract}

\maketitle

\section{Introduction}
Throughout this paper we work over the field  of complex numbers $\CC$. We denote by $\NN$ the set of positive integers. 

In the process of  running minimal model program (MMP, for short), it is important to control the singularities that occur on the variety produced in each step. There are three types of steps in MMP: divisorial contractions, filps, Mori fiber spaces. In the first two cases, the singularities on the resulting variety are as good as those on the original variety. However, in the last case, for a Mori fiber space $f:X\rightarrow Z$ it is much more complicated to understand the singularities on $Z$. M\textsuperscript{c}Kernan conjectured that in this case the singularities on $Z$ are bounded in terms of those on $X$.

\begin{conjecture}[M\textsuperscript{c}Kernan]\label{conj:mckernan}
Fix a positive integer $d$ and a real number $\epsilon>0$. There exists $\delta>0$ depending only on $d,\epsilon$ and satisfying the following. Assume  

(1)~ $X$ is an $\epsilon$-lc variety of dimension $d$,

(2)~ $f:X\rightarrow Z$ is a Fano fibration (i.e. a contraction with $-K_X$ being ample over $Z$), and

(3) $K_Z$ is $\QQ$-Cartier.\\
Then $Z$ is $\delta$-lc.
\end{conjecture}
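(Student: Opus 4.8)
The plan is to reduce the conjecture to the canonical bundle formula statement (Shokurov's conjecture) and then quote the estimate recalled in the abstract; this renders the relative dimension one case essentially formal. So let $f\colon X\to Z$ be a Mori fiber space with $X$ being $\QQ$-factorial and $\epsilon$-lc. First I record that $X$ is of Fano type over $Z$: since $-K_X$ is ample over $Z$ and $X$ is klt, for a sufficiently small general ample $\QQ$-divisor $A$ the pair $(X,A)$ is klt with $-(K_X+A)$ still ample over $Z$. Thus $f$ is a Fano type fibration of some relative dimension $d=\dim X-\dim Z\geq 1$.

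Next I would manufacture a boundary $0\leq B$ on $X$ with $K_X+B\sim_{\RR}0/Z$ and $(X,B)$ lc with controlled singularities. In relative dimension one this is straightforward: the general fiber is $F\cong\PP^1$ with $-K_X\cdot F=2$, so fixing a very ample linear system $|M|$ on $X$ with $m:=M\cdot F$ large and a general member $H\in|M|$, the relation $\rho(X/Z)=1$ forces $-K_X\equiv\tfrac{2}{m}H$ over $Z$; and since a $\QQ$-Cartier divisor numerically trivial over a Mori fiber space is, up to a multiple, pulled back from the base, $K_X+\tfrac{2}{m}H\sim_{\QQ}0/Z$. Put $B:=\tfrac{2}{m}H$. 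A Bertini argument gives that $(X,H)$ is lc, and then from the identity $a(E,X,tH)=(1-t)\,a(E,X,0)+t\,a(E,X,H)$ (valid for any divisor $E$ over $X$ and $t\in[0,1]$) together with $a(E,X,0)\geq\epsilon$ and $a(E,X,H)\geq0$ one gets that $(X,B)$ is $(1-\tfrac{2}{m})\epsilon$-lc. Applying the main theorem to the relative dimension one Fano type fibration $(X,B)\to Z$, one may choose $M_Z\geq 0$ so that $(Z,B_Z+M_Z)$ is $\delta$-lc with $\delta=\sup\{((1-\tfrac{2}{m})\epsilon-1/n)/(n-1)\mid n\geq 2\}$; as $B_Z\geq 0$ and $M_Z\geq0$, $Z$ itself is $\delta$-lc. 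Letting $m\to\infty$ and using that $\sup_n(\epsilon-1/n)/(n-1)$ is the increasing limit of these values, $Z$ is $\delta$-lc for the value of $\delta$ attached to $\epsilon$; in particular $\delta=\epsilon^2/4$ works.

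I expect the real difficulty to lie not in this reduction but in the main theorem. For that, the approach I would take is a local analysis over each prime divisor $D\subset Z$: the generic fiber is a log Calabi--Yau $(\PP^1,B_F)$ with $\deg B_F=2$ and all coefficients $\leq1-\epsilon$, hence with at least $\lceil 2/(1-\epsilon)\rceil$ marked points, and one must track how a degeneration of this marked $\PP^1$ along $D$ contributes to the discriminant coefficient and to a nonnegative representative of the moduli part. The extremal configuration should involve, over $D$, an $n$-fold ramification of the fibration, which is exactly what produces the value $(\epsilon-1/n)/(n-1)$ and hence the sharp bound $\delta=\sup_n(\epsilon-1/n)/(n-1)$, while the example referred to in the abstract pins down the optimal $\delta$ to order $\epsilon^2$. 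Finally, in relative dimension $\geq 2$ the same reduction strategy applies, but the construction of $B$ with $\epsilon'$-lc singularities becomes the genuine obstacle---it requires boundedness of $\epsilon$-lc Fano varieties to handle the general fiber before spreading out---and there one appeals to Birkar's general form of Shokurov's conjecture rather than to the explicit estimate of this paper.
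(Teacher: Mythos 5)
The statement you are proving is Conjecture~\ref{conj:mckernan}, which the paper presents as an open problem and does \emph{not} prove in general; it establishes only the relative dimension one case, as Theorem~\ref{thm:main2}. Keeping that in mind, your proposal for $\dim X - \dim Z = 1$ tracks the paper's proof of Theorem~\ref{thm:main2} closely, with one cosmetic difference. The paper passes to a relative anticanonical model, picks $N$ with $1/N\leq 1-\epsilon$ and $-NK_X$ very ample over $Z$, takes $H\sim -NK_X/Z$ general and sets $B=H/N$; then $K_X+B\sim_\QQ 0/Z$ and $(X,B)$ is already $\epsilon$-lc, so Corollary~\ref{newcor} applies in one shot. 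Your variant instead takes an arbitrary very ample $H$ with $H\cdot F=m$, deduces $mK_X+2H\sim_\QQ 0/Z$ from $\rho(X/Z)=1$, interpolates log discrepancies to show $(X,\tfrac{2}{m}H)$ is $(1-\tfrac{2}{m})\epsilon$-lc, and lets $m\to\infty$. Both are correct; the paper's choice of $H$ proportional to $-K_X$ simply eliminates the $(1-\tfrac{2}{m})$ loss and the limiting step.

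Two corrections of emphasis. First, in relative dimension $\geq 2$ you rightly defer to Birkar, but your diagnosis of where the difficulty lies is off: manufacturing an effective $B$ with $K_X+B\sim_\QQ 0/Z$ and $(X,B)$ $\epsilon$-lc works exactly as above in any relative dimension (pass to the relative anticanonical model, take $H\sim -NK_X/Z$ general with $1/N\leq 1-\epsilon$), and does not require boundedness of $\epsilon$-lc Fanos. The deep input — and where Birkar does use boundedness — is the subsequent claim that $(Z,B_Z+M_Z)$ can be made $\delta$-lc, i.e.\ Shokurov's conjecture itself, not the construction of $B$. Second, your heuristic for the interior mechanism of the main theorem (an $n$-fold ramification of a degenerating marked $\PP^1$ over a prime divisor of $Z$) does not resemble the paper's actual argument, which reduces to the local surface statement Theorem~\ref{thm:surface} and proves it via Newton polygons together with Kawakita's result \cite{Kaw17} on divisors computing the minimal log discrepancy. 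Since you quote the main theorem as a black box, this does not affect the soundness of your reduction, but it is where the real content of the paper lives.
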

For a Fano fibration $X\to Z$ where $X$ is a threefold with only terminal singularities, Mori and Prokhorov \cite{MP08} proved that $Z$ is $1$-lc. 
The toric case of M\textsuperscript{c}Kernan's conjecture was confirmed by Alexeev and Borisov \cite{AB14}.

Shokurov proposed a more general conjecture which implies M\textsuperscript{c}Kernan's conjecture. Before stating this conjecture, we recall some background. Let $\pi:(X,B)\rightarrow Z$ be an lc-trivial fibration (see Definition \ref{def:canonical}). For example, $\pi:X\rightarrow Z$ is a contraction between normal varieties and   $(X,B)$ is an lc pair with $K_X+B\sim_{\RR} 0/Z$. By constructions of Kawamata \cites{Kaw97,Kaw98} and Ambro \cite{Amb05}, we have the so-called canonical bundle formula
$$K_X+B\sim_{\RR} \pi^*(K_Z+B_Z+M_Z),$$
where $B_Z$ is the discriminant part and $M_Z$ is the moduli part (see Definition \ref{def:KKA}). The moduli part $M_Z$ is determined up to $\RR$-linear equivalence and thus can be represented by different $\RR$-divisors. A projective variety $F$ is said to be of Fano type if it admits a klt pair $(F,\Delta)$ such that $-(K_F+\Delta)$ is ample. We are now ready to state Shokurov's conjecture.
\begin{conjecture}[Shokurov, cf. {\cite[Conjecture 1.5]{HJL22}}]\label{conj:shokurov}
Fix a positive integer $d$ and a real number $\epsilon>0$. There exists $\delta>0$ depending only on $d,\epsilon$ and satisfying the following. Let $\pi:(X,B)\rightarrow Z$ be an lc-trivial fibration and $z\in Z$ a codimension $\geq 1$ point such that

(1)~ $\dim X - \dim Z=d$,

(2)~ $\mld(X/Z\ni z,B)\geq \epsilon$, and

(3)~ the generic fiber of $\pi$ is of Fano type.\\
Then we can choose $M_Z\geq 0$ representing the moduli part such that $\mld(Z\ni z, B_Z+M_Z)\geq \delta$.
\end{conjecture}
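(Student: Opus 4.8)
The statement to prove is Shokurov's conjecture (Conjecture~\ref{conj:shokurov}) in full generality; since this is the general-dimensional, general-$\epsilon$ statement, the plan is necessarily to reconstruct (a streamlined version of) Birkar's argument from \cite{Bir23}, reducing the problem by standard techniques to boundedness statements that the literature supplies. First I would reduce to the case where $\pi$ is an actual contraction with $K_X+B\sim_{\RR}0/Z$ and where we may work $\QQ$-factorially: replace $X$ by a small $\QQ$-factorialization and use the definition of lc-trivial fibration to arrange $K_X+B\sim_{\RR}0/Z$ after modifying $B$, being careful to track how $\mld$'s change. Next I would localize: fixing the point $z\in Z$, the claim is about the minimal log discrepancy of $(Z,B_Z+M_Z)$ over $z$, so after passing to a log resolution $Y\to Z$ and an appropriate base change/restriction we may assume $Z$ is affine, $z$ is a closed point, and that the prime divisor $E$ over $Z$ computing the mld has center $z$. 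The goal is then to bound $a(E,Z,B_Z+M_Z)$ from below by a constant depending only on $d,\epsilon$.

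The heart of the matter is controlling the two parts $B_Z$ and $M_Z$ separately near $E$. For the discriminant part $B_Z$, one uses the definition via lc thresholds: the coefficient of a prime divisor $D\subset Z$ in $B_Z$ is $1-\sup\{t : (X,B+t\,\pi^*D) \text{ is lc over the generic point of } D\}$, and an $\epsilon$-lc hypothesis on $(X,B)$ together with \emph{adjunction} and inversion of adjunction bounds these coefficients away from $1$ in a controlled way; the contribution of $B_Z$ to the discrepancy at $E$ is then estimated by how $B_Z$ meets the center of $E$. For the moduli part $M_Z$, the key input is that, because the generic fiber is of Fano type and $(X,B)$ is $\epsilon$-lc, the fibration is, after a bounded base change, birational to a family drawn from a \emph{bounded} family of $\epsilon$-lc Fano-type pairs --- this is where global boundedness of $\epsilon$-lc Fano varieties (the BAB theorem, Birkar's proof of the Borisov--Alexeev--Borisov conjecture) enters. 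Boundedness of the family lets one choose the moduli divisor $M_Z\geq 0$ to be the pullback of a \emph{semiample} divisor with bounded ``complexity'' (bounded Cartier index after a bounded base change, or bounded multiplicity along divisors), so that $M_Z$ contributes boundedly to the discrepancy at $E$. Combining the two estimates, $a(E,Z,B_Z+M_Z)\geq \delta$ for $\delta=\delta(d,\epsilon)$.

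I expect the main obstacle to be the moduli part $M_Z$ --- specifically, turning ``the fibers lie in a bounded family'' into a genuinely effective bound on the singularities that $M_Z$ can impose over the point $z$. The subtlety is that $M_Z$ is only defined up to $\RR$-linear equivalence, and while boundedness of the family gives a uniform bound on, say, the Cartier index of a representative of $M_Z$ on a suitable birational model, one must then descend to $Z$ and control what happens under the (possibly quite singular) pushforward, ensuring non-negativity $M_Z\geq 0$ can be preserved simultaneously with the discrepancy bound. A secondary technical point is handling the base change cleanly: one wants a finite cover $Z'\to Z$ of bounded degree that trivializes the monodromy of the moduli part, and one must check that mld's over $z$ transform in a controlled way under such a cover (using e.g. that $\mld$ changes by at most the ramification, and Riemann--Hurwitz-type bounds). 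Once these boundedness inputs are in place, assembling the final $\delta$ is a matter of bookkeeping: intersecting the resolution data, bounding coefficients, and taking the minimum of finitely many explicit quantities.
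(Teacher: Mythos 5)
The statement you are proving is stated in the paper as a \emph{conjecture}, and the paper does not prove it: it only proves the case $\dim X-\dim Z=1$ (Theorem~\ref{thm:main}), by a completely different and elementary route (reduction to a $\PP^1$-bundle over a curve, then a Newton-polygon computation on smooth surface germs via Kawakita's theorem). Your proposal instead sketches the general-dimensional argument of Birkar~\cite{Bir23}. Two immediate problems: first, \cite{Bir23} proves the conjecture only under the stronger hypothesis (2') that $(X,B)$ is globally $\epsilon$-lc, not under the local condition $\mld(X/Z\ni z,B)\geq\epsilon$ stated here, so even a faithful reconstruction of that argument would not prove the statement as written; second, what you have written is a plan, not a proof, and the plan's load-bearing steps are precisely the open or very hard parts.

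Concretely: (i) your claim that adjunction and inversion of adjunction ``bound the coefficients of $B_Z$ away from $1$ in a controlled way'' is not a standard consequence of the $\epsilon$-lc hypothesis --- for a codimension-one point $z$ this bound \emph{is} the conjecture, and Example~\ref{exa} of the paper shows the coefficient can approach $1-\epsilon^2$, so any bound requires genuine work (in relative dimension one the paper extracts it from a delicate combinatorial analysis of Newton diagrams, Theorem~\ref{thm:surface}). (ii) Your treatment of $M_Z$ assumes that boundedness of the generic fiber (BAB) upgrades to boundedness of the whole fibration and to a choice of $M_Z\geq 0$ that is semiample with bounded Cartier index; b-semiampleness of the moduli b-divisor is the Prokhorov--Shokurov conjecture and is only known in special cases (the paper invokes it via \cite{PS09} exactly because the fibers are rational curves), and converting fiberwise boundedness into effective control of $\textbf{M}$ over the point $z$ is the core difficulty of \cite{Bir23}, not a bookkeeping step. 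As written, the proposal has no proof content beyond naming the inputs; if your goal is the result the paper actually establishes, the correct target is the $d=1$ case, where the argument is local on a smooth surface and entirely explicit.
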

Here we denote by $\mld(X/Z\ni z,B)$ (resp. $\mld(Z\ni z,B_Z+M_Z)$) the infimum of the log discrepancy of $E$ with respect to $(X,B)$ (resp. $(Z,B_Z+M_Z)$) where $E$ runs over all prime divisors over $X$ (resp. $Z$) whose image on $Z$ is $\overline{z}$.

Shokurov's conjecture contains more information about singularities on Fano type fibrations than M\textsuperscript{c}Kernan's conjecture. One of its consequences is that given a Fano fibration $X\to Z$ where $X$ is $\epsilon$-lc, the multiplicities of fibers over codimension 1 points on $Z$ are bounded from above in terms of $\epsilon$ and the relative dimension. As a related result, Mori and Prokhorov \cite{MP09} showed that if $X\rightarrow Z$ is a del Pezzo fibration from a terminal threefold onto a smooth curve, then the multiplicity of an irreducible fiber is at most 6.

\begin{remark}\label{remnew}
(a) With a slightly stronger condition (2') that $(X,B)$ is an $\epsilon$-lc pair, Birkar \cite{Bir16} proved Conjecture \ref{conj:shokurov} when either $\dim X-\dim Z=1$ or $(F,\Supp B|_F)$ is log bounded where $F$ is a general fiber of $\pi$. Combining the main results in \cites{Bir19,Bir21}, the conjecture under condition (2') holds when the coefficients of the horizontal part of $B$ are bounded from below by a fixed positive number.

(b) Birkar and Chen \cite{BC21} proved a modified version of Conjecture \ref{conj:shokurov} under condition (2') in the toric setting, which implies the boundedness of multiplicities of fibers over codimension 1 points for toric Fano fibrations.


(c) Very recently, Conjecture \ref{conj:shokurov} was proved by Birkar \cite{Bir23} under condition (2'). 
\end{remark}

In results mentioned in Remark \ref{remnew}, the existence of $\delta$ was proved but the explicit value of $\delta$ was not given. When $\dim X-\dim Z=1$ and $\epsilon\geq 1$, Han, Jiang and Luo \cite{HJL22} gave the optimal value of $\delta=\epsilon-1/2$ in Conjecture \ref{conj:shokurov}.

In this paper, we give the optimal value of $\delta$ in Conjecture \ref{conj:shokurov} for the case that $0<\epsilon\leq 1$ and $\dim X-\dim Z=1$.


\begin{theorem}\label{thm:main}
Let $\pi:(X,B)\rightarrow Z$ be an lc-trivial fibration and $z\in Z$ a codimension $\geq 1$ point such that

(1)~ $\dim X - \dim Z=1$,

(2)~ $\mld(X/Z\ni z,B)\geq \epsilon$ where $0<\epsilon\leq 1$, and

(3)~ the generic fiber of $\pi$ is a rational curve.\\
Then we can choose $M_Z\geq 0$ representing the moduli part such that $\mld(Z\ni z, B_Z+M_Z) \geq \delta,$ where
\begin{align*}
\delta=\frac{(2n+1)\epsilon-1}{2n(n+1)},\quad \text{when $\epsilon\in \left[\frac{1}{n+1},\frac{1}{n}\right]$ for some $n\in \NN$}.
\end{align*}
In particular, $\delta\geq \epsilon^2/2$ and the equality holds if and only if $\epsilon=1/n$ for some $n\in \NN$.
\end{theorem}
\begin{remark}

(a) The value of $\delta$ in Theorem \ref{thm:main} is optimal by Example \ref{exa}.

(b) As the generic fiber of $\pi$ is a rational curve, the condition that $\pi:(X,B)\rightarrow Z$ is an lc-trivial fibration is equivalent to the following condition: $\pi:X\rightarrow Z$ is a contraction between normal varieties with an lc sub-pair $(X,B)$ such that $K_X+B\sim_{\RR} 0/Z$ and $B$ is effective over generic point of $Z$ (see Remark \ref{remark:effective}).


\end{remark}

\begin{example}\label{exa}
Let $\epsilon=\frac{p}{q}$ be a rational number such that $\frac{1}{n+1}\leq \epsilon \leq \frac{1}{n}$, where $p,q,n\in \NN$.
Let $Z=\mathbb A^1$ with coordinate $t$ and let $X=Z \times \mathbb P^1$ with homogeneous coordinates $x,y$. Let $\pi:X\rightarrow Z$ be the projection and let $z$ be the origin of $Z$. Let $B$ be the $\QQ$-divisor
$$\frac{1}{qn(n+1)}\cdot \left(t^{q(2n+1)-p}x^{2qn(n+1)}+y^{2qn(n+1)}=0\right)+\frac{qn-p}{q(n+1)}\cdot (t=0).$$
Then $K_X+B\sim_{\mathbb Q} 0/Z$. By Lemma \ref{lemexa} we have
$$\mld(X/Z\ni z,B)=\epsilon \quad  \text{ and }\quad \lct(X/Z\ni z,B; \pi^*z)= \frac{(2n+1)\epsilon-1}{2n(n+1)}.$$
So the coefficient of $z$ in the discriminant divisor $B_Z$ is equal to $$1-\frac{(2n+1)\epsilon-1}{2n(n+1)}.$$
\end{example}

As a corollary, we have the following global version of Theorem \ref{thm:main} which involves less technical notations.

\begin{corollary}\label{newcor}
Let $(X,B)$ be a pair and $\pi:X\rightarrow Z$ a contraction between normal varieties such that

(1) $\dim X-\dim Z=1$,

(2) $a(E,X,B)\geq \epsilon$ for any prime divisor $E$ over $X$ with $\pi(\centerr_X(E))\neq Z$, where $0<\epsilon\leq 1$,

(3) $K_X+B\sim_{\mathbb R} 0/Z$,

(4) the generic fiber of $\pi$ is a rational curve.\\
Then we can choose $M_Z\geq 0$ representing the moduli part such that $(Z,B_Z+M_Z)$ is $\delta$-lc, where $\delta$ is as in Theorem \ref{thm:main}.
\end{corollary}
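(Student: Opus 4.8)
The plan is to deduce Corollary \ref{newcor} from Theorem \ref{thm:main}, which requires essentially only a change of language and a local-to-global packaging argument. First I would verify that the hypotheses of the corollary put us in the setting of an lc-trivial fibration. By hypothesis (4) the generic fiber is a rational curve, so by Remark \ref{remark:effective} (cited as Theorem \ref{thm:main}(c)) the conditions that $\pi\colon X\to Z$ is a contraction of normal varieties, $(X,B)$ is lc, $K_X+B\sim_\RR 0/Z$, and $B$ is effective over the generic point of $Z$ together say exactly that $\pi\colon(X,B)\to Z$ is an lc-trivial fibration. The only subtlety is that the corollary does not \emph{a priori} assume $(X,B)$ is lc, only that $a(E,X,B)\ge\epsilon$ for divisors $E$ over $X$ that are vertical over $Z$; but since the fibers are curves and $B$ is effective on the generic fiber, standard adjunction/inversion-of-adjunction along a general fiber forces the horizontal log discrepancies to be controlled as well, so $(X,B)$ is automatically lc (in fact sub-$\epsilon$-lc in the vertical directions). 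I would spell out this reduction carefully since it is where a reader might worry.

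Next I would translate the singularity hypothesis (2) into the mld condition (2) of Theorem \ref{thm:main}. Condition (2) of the corollary asks that $a(E,X,B)\ge\epsilon$ for \emph{every} prime divisor $E$ over $X$ whose center on $Z$ is a proper subvariety, i.e. for every divisor that is vertical or maps into a proper closed subset of $Z$. In particular, fixing any codimension-$\ge 1$ point $z\in Z$, every prime divisor $E$ over $X$ with image $\overline{z}$ on $Z$ satisfies $a(E,X,B)\ge\epsilon$, so $\mld(X/Z\ni z,B)\ge\epsilon$. Thus Theorem \ref{thm:main} applies at every such $z$ and yields, for each $z$, a choice of $M_Z\ge 0$ with $\mld(Z\ni z,B_Z+M_Z)\ge\delta$ where $\delta=\sup\{(\epsilon-1/n)/(n-1)\mid n\in\NN_{\ge 2}\}\ge\epsilon^2/4$.

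The remaining point — and the one place a little care is needed — is that the moduli part is only defined up to $\RR$-linear equivalence, so one must produce a \emph{single} representative $M_Z\ge0$ that works simultaneously for all codimension-$\ge1$ points. Here I would argue that the bound is really a statement about divisorial valuations, and that it suffices to control $M_Z$ near each codimension-$1$ point of $Z$ (the discrepancy of a divisor $E$ over $Z$ with center $\overline z$ only sees $B_Z+M_Z$ in a neighbourhood of the generic point of $\overline z$, hence is governed by the coefficients of $B_Z+M_Z$ along the codimension-$1$ components through $\overline z$). So it is enough to choose $M_Z\ge0$ so that the pair $(Z,B_Z+M_Z)$ is $\delta$-lc at every codimension-$1$ point, equivalently so that $\lfloor$ its coefficients $\rfloor$ stay $\le 1-\delta$; and the proof of Theorem \ref{thm:main} in fact shows the bound $\operatorname{coeff}_D(B_Z+M_Z)\le 1-\delta$ for each prime divisor $D$ on $Z$ with a uniform (local) choice of $M_Z$ that can be patched, since away from a codimension-$\ge2$ set the moduli part construction is canonical. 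Thus $(Z,B_Z+M_Z)$ is $\delta$-lc globally. I expect the main obstacle to be precisely this patching/uniformity issue — making sure one global $M_Z\ge0$ works at all $z$ at once rather than one $M_Z$ per point — together with cleanly justifying the automatic lc-ness of $(X,B)$ from hypothesis (2); the numerical content is entirely inherited from Theorem \ref{thm:main}, with the inequality $\delta\ge\epsilon^2/4$ coming from taking $n=\lceil 2/\epsilon\rceil$ (or the nearby integer optimizing $(\epsilon-1/n)/(n-1)$) exactly as in the theorem.
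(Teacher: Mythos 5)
Your route (deduce the corollary from Theorem~\ref{thm:main} applied at each codimension-$\ge1$ point $z$) is a reasonable packaging, and you correctly isolate the real difficulty: Theorem~\ref{thm:main} hands you an $M_Z\ge0$ \emph{per} point $z$, whereas the corollary needs a \emph{single} $M_Z\ge0$ that is $\delta$-lc simultaneously at every $z$. But the way you propose to close that gap is incorrect. You argue that the log discrepancy of a divisor $E$ over $Z$ with center $\overline z$ ``is governed by the coefficients of $B_Z+M_Z$ along the codimension-$1$ components through $\overline z$'', so that controlling coefficients along prime divisors of $Z$ already yields the $\delta$-lc property. This is false: $\mld(Z\ni z,B_Z+M_Z)$ ranges over all divisorial valuations whose center is $\overline z$, including those extracted at codimension-$\ge2$ centers, and such valuations see the full local geometry of $B_Z+M_Z$, not just its coefficients along prime divisors. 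For instance on a surface, two prime components of $B_Z+M_Z$ with coefficients close to $1-\delta$ meeting at a point already force a log discrepancy $\approx 2\delta-1<\delta$ for the point blow-up; one cannot detect this from codimension-$1$ coefficients alone. So ``patching local choices that control coefficients'' does not produce a $\delta$-lc pair.

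The mechanism the paper actually uses (it reduces Corollary~\ref{newcor} to Proposition~\ref{prop:codim1} and Lemma~\ref{lemma:ineuquality}, following the proof of \cite[Corollary~1.8]{HJL22}, not through Theorem~\ref{thm:main}) is the b-semiampleness of the moduli b-divisor $\mathbf M$, quoted from \cite[Theorem~8.1]{PS09}: there is a birational model $\sigma\colon Z'\to Z$ on which $M_{Z'}$ is a semiample $\RR$-Cartier divisor and $\mathbf M$ descends, i.e.\ $\mathbf M_{Z''}=\tau^*M_{Z'}$ for every higher model $\tau\colon Z''\to Z'$. Choosing $M_{Z'}\ge0$ general in its $\RR$-linear system, adding it does not worsen singularities, so the singularities of $(Z',B_{Z'}+M_{Z'})$ are controlled entirely by the discriminant $B_{Z'}$ — and Proposition~\ref{prop:codim1} bounds the coefficients of the discriminant $B_{Z''}$ at once on \emph{all} higher birational models $Z''$, which is exactly what $\delta$-lc requires. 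Pushing forward to $Z$ gives one global $M_Z\ge0$ with $(Z,B_Z+M_Z)$ $\delta$-lc. Your proposal does not invoke b-semiampleness anywhere, and without it there is no way to neutralize the moduli part at codimension-$\ge2$ centers. Separately, your claim that hypothesis~(2) plus effectivity of $B$ over the generic point automatically gives lc-ness of $(X/Z\ni\eta,B)$ should be spelled out (it is a one-line consequence of $K_F+B|_F\sim_\RR 0$ and $B|_F\ge0$ on the generic fiber $F\cong\PP^1$, which forces all coefficients of $B|_F$ to be $\le1$), but that is a minor point compared to the patching gap.
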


Applying Corollary \ref{newcor}, we give an effective value of $\delta$ in a more general form of Conjecture \ref{conj:mckernan} when the relative dimension is 1.
\begin{corollary}\label{thm:main2}
Let $\pi:X\rightarrow Z$ be a contraction between normal varieties such that 

(1) $\dim X - \dim Z=1$,

(2) $X$ is $\epsilon$-lc, where $0<\epsilon\leq 1$,

(3) $-K_X$ is big and nef over $Z$.\\
Then 

(1) $Z$ is $\delta$-lc if $K_Z$ is $\QQ$-Cartier, and 

(2) for any codimension 1 point $z\in Z$, the multiplicity of each component of $\pi^*z$ is bounded from above by $1/\delta$, where $\delta$ is as in Theorem \ref{thm:main}.
\end{corollary}


Following the idea in \cite{HJL22}, we may reduce Theorem \ref{thm:main} to the case that $\pi:X\rightarrow Z$ is a $\PP^1$-bundle over a smooth curve, and finally reduce to a local problem on estimating the log canonical threshold of a smooth curve with respect to a pair on a smooth surface. Then we solve this local problem by proving the following theorem.

\begin{theorem}\label{thm:surface}
Let $(X\ni P,B)$ be a germ of surface pair such that $X$ is smooth and $\mld(X\ni P,B)\geq \epsilon$, where $0<\epsilon\leq 1$. Let $C$ be a smooth curve on $X$ passing through $P$ such that $\mult_C B \leq 1-\epsilon$ and $(B'\cdot C)_P\leq 2$, where $B'=B-\mult_C B \cdot C$. Then $$\lct(X\ni P,B;C)\geq \delta,$$
where $\delta$ is as in Theorem \ref{thm:main}.
\end{theorem}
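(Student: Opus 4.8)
The plan is to prove Theorem \ref{thm:surface} by analyzing the log canonical threshold $\lct(X\ni P, B; C) = c$ via the valuative criterion: there exists a prime divisor $E$ over $X$ with center $P$ such that
\[
a(E, X, B + cC) = 0, \qquad \text{i.e.} \qquad 1 + \ord_E(K_X) = \ord_E(B) + c\cdot \ord_E(C).
\]
Since $X$ is smooth and $C$ is a smooth curve, I would extract such a computing divisor $E$ by a sequence of blowups of points, and keep track of the numerical data along this sequence: for a blowup at a point $Q$ on the current surface, the discrepancy contribution is governed by the multiplicities $\mult_Q$ of the strict transforms of $B'$ and $C$. Write $b = \mult_C B \le 1-\epsilon$, so $B = B' + bC$ with $B' \ge 0$ not containing $C$. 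I want to show $c \ge (\epsilon - 1/n)/(n-1)$ for every integer $n \ge 2$; equivalently, assuming $c < (\epsilon - 1/n)/(n-1)$ for some fixed $n$, I derive a contradiction with either $\mld(X\ni P, B) \ge \epsilon$ or the intersection bound $(B'\cdot C)_P \le 2$.

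**Key steps.** First I would reduce to the case where the computing divisor $E$ is obtained by blowing up a point and then repeatedly blowing up the intersection point of the exceptional curve with the strict transform of $C$ (a ``toric-like'' chain over $C$); this is the relevant configuration because the constraint involves $C$ and the intersection number $(B'\cdot C)_P$. Along such a chain of length $k$, if the strict transform of $B'$ meets $C$ with local intersection multiplicity $m$ at each relevant infinitely-near point (with $\sum m_i \le (B'\cdot C)_P \le 2$), the log discrepancy of the $k$-th exceptional divisor $E_k$ with respect to $(X, B+cC)$ works out to something like $a(E_k, X, B+cC) = (k+1) - k\,b - (\text{weighted sum of the }m_i) - k\,c$ or a variant thereof; setting this $\ge 0$ gives an upper bound on $c$ in terms of $b$, the $m_i$, and $k$, while the hypothesis $\mld(X\ni P, B) \ge \epsilon$ forces the parallel bound $a(E_k, X, B) = (k+1) - kb - (\text{weighted sum}) \ge \epsilon$. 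Combining these — using $b \le 1-\epsilon$ and $\sum m_i \le 2$ and optimizing over the length $k$ and the distribution of the $m_i$ — should yield $c \ge (\epsilon - 1/n)/(n-1)$, with $n$ playing the role of $k+1$ (the number of blowups plus one). The point of having the free parameter $n$ is that the true lct is the infimum over all valid configurations, and each $n$ gives one clean lower bound; the stated $\sup$ over $n$ is then automatic.

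**The main obstacle.** The hard part is controlling divisors $E$ that are \emph{not} of the simple chain-over-$C$ form — for instance, when the computing divisor requires blowing up points that lie on $C$ only after the first blowup, or points where $B'$ has high multiplicity away from $C$. I expect to handle this by a monotonicity / convexity argument: at each blowup, either the center lies on the strict transform of $C$ (the chain case above) or it does not, and in the latter case the hypothesis $\mld(X \ni P, B) \ge \epsilon$ on the smooth surface already bounds the multiplicity of $B'$ there (roughly $\mult_Q B' \le 2 - \epsilon$ for a point and less for infinitely-near points, by the usual smooth-surface discrepancy inequality), so such steps only help. One needs to argue that the worst case is realized on the chain, and that within the chain the extremal distribution of the intersection multiplicity $(B'\cdot C)_P \le 2$ is to concentrate it (or spread it) in the way that makes the bound $(\epsilon-1/n)/(n-1)$ tight — this matches Example \ref{exa}, where $B'$ meets $C = (y=0)$ with multiplicity $2$ at the relevant point (since $x^m + t^{m+1}y^m$ restricted to $y=0$ vanishes to order $m$, but after the right sequence of blowups the strict transform contributes the factor $2$), and the coefficient $1 - 1/m^2$ in $B_Z$ corresponds to $c = 1/m^2$ with $\epsilon = 1/m$, i.e. $c = \epsilon^2$. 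Finally I would verify the two numerical reductions $(\epsilon-1/n)/(n-1) \ge \epsilon^2/4$ (take $n \approx 2/\epsilon$) and $\ge 3/2$ when $\epsilon$ is close to $1$ (take $n=2$), completing the proof.
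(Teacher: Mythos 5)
Your proposal takes a genuinely different route from the paper — infinitely-near points and chains of blowups along the strict transform of $C$, rather than the paper's Newton-polytope method. The paper first uses Lemma~\ref{lemma:mld=0} to show $\mld(X\ni P, B+tC)=0$ at $t=\lct$, then invokes Kawakita's theorem \cite{Kaw17} to produce local coordinates $(x,y)$ in which a \emph{monomial} valuation (weighted blow-up) computes this mld. This places $\mathbf{1}=(1,1)$ on the Newton diagram $\Gamma(B+tC)$, and the rest is a case analysis on the slope of the face through $\mathbf{1}$, combined with the Diophantine approximation Lemma~\ref{lemma:mod} (essentially the continued-fraction expansion of $\alpha/\beta$). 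A chain-of-blowups approach is in principle plausible, but as written your plan has two genuine gaps.

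First, the reduction to the ``chain over $C$'' is asserted, not proved, and it is exactly the hard point. The mld-computing divisor for $(X,B+tC)$ need not be obtained by repeatedly blowing up $E_i\cap \widetilde C$; in general it is a monomial valuation in coordinates that have nothing to do with $C$ (that is precisely what Kawakita's theorem buys the paper — the freedom to choose coordinates \emph{after} knowing the mld is computed by a weighted blow-up). Your proposed ``monotonicity/convexity'' argument to rule out non-chain divisors is a placeholder: without it the case analysis never gets off the ground, and the paper's Cases 3--4 (compact face of slope different from $b$) are precisely the configurations where the computing divisor is not a chain over $C$.

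Second, the role of $n$ is misconceived. You write that $n$ ``plays the role of $k+1$, the number of blowups,'' so that each chain length $k$ yields the bound for $n=k+1$. But for a \emph{fixed} pair $(B,C)$ the theorem asserts the single number $t=\lct(X\ni P,B;C)$ is $\geq (\epsilon-1/n)/(n-1)$ for \emph{every} $n\geq 2$ simultaneously; different $n$ require choosing different auxiliary divisors for the same configuration. In the paper's Case 3 this is where Lemma~\ref{lemma:mod} enters: given the slope data $\alpha/\beta$ and a target $n$, one must find $k\leq n-1$ with $k\alpha/\beta$ within $1/n$ of an integer, and only then does the weighted blow-up with weights $(k,\lceil k\alpha/\beta\rceil)$ (or $(k,\lfloor k\alpha/\beta\rfloor)$) give the estimate $t\geq(\epsilon-1/n)/k\geq(\epsilon-1/n)/(n-1)$. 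This rational-approximation-with-bounded-denominator step is absent from your plan, and it is not recoverable from ``optimizing over the length $k$'' alone — the existence of a good $k\leq n-1$ is a nontrivial number-theoretic fact. Any blow-up chain version of the argument would have to rediscover it.
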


The idea of the proof of Theorem \ref{thm:surface} is as follows. Let $\Gamma_+(B+tC)$ be the Newton polytope of $B+tC$ (see Definition \ref{def:newton}), which depends on the choice of local coordinates of $X\ni P$. One can show that 
\begin{align}\label{pp}
\text{if $(X,B+tC)$ is lc near $P$, then $\textbf{1}=(1,1)\in \Gamma_+(B+tC)$}
\end{align}
(see Lemma \ref{lemma:1 in Newton}). However, in general the converse of \eqref{pp} does not hold. For example, let $X=\mathbb A^2$ with coordinates $x,y$ and $B=(x^2+2xy+y^2=0)$, then $\textbf{1}\in \Gamma_+(B)$ while $(X,B)$ is not lc. The key point is that, by making use of \cite[Theorem 1]{Kaw17} we can pick suitable local coordinates such that the converse of \eqref{pp} holds. Then
$$\lct(X\ni P,B;C)=\sup\{t>0\mid \textbf{1}\in \Gamma_+(B+tC)\}$$
and hence we can estimate its lower bound by combinatorics.

\medskip

\noindent\textbf{Acknowledgments.} The author would like to thank Caucher Birkar for his valuable comments and constant support. This work was partially done at Tsinghua University. It was completed at Sun Yat-sen University with support of the start-up fund from Sun Yat-sen University.


\section{Preliminaries}
We will freely use the standard notations and definitions in \cites{KM98,BCHM10}. 
\subsection{Sub-pairs and singularities}
A contraction $\pi:X\rightarrow Z$ is a projective morphism of varieties with $\pi_*\mathcal{O}_X=\mathcal{O}_Z$. 

\begin{definition}[cf. {\cite[Definition 2.4]{HJL22}}]
A sub-pair $(X,B)$ consists of a normal variety $X$ and an $\RR$-divisor $B$ on $X$ such that $K_X+B$ is $\RR$-Cartier. A sub-pair $(X,B)$ is called a pair if $B$ is effective.

A (relative) sub-pair $(X/Z\ni z,B)$ consists of  a contraction $\pi:X\rightarrow Z$ between normal varieties, a (not necessarily closed) point $z\in Z$ and an $\RR$-divisor $B$ on $X$ such that $K_X+B$ is $\RR$-Cartier.
A sub-pair $(X/Z\ni z,B)$ is called a (relative) pair if $B$ is effective. In the case that $Z=X$, $z=x$ and $\pi$ is the identity morphism, we will use  $(X\ni x,B)$ instead of $(X/Z\ni z,B)$. A sub-pair $(X\ni x,B)$ is called a germ if $x$ is a closed point.
\end{definition}

\begin{definition} [cf. {\cite[Definition 2.5]{HJL22}}]
Let $(X,B)$ be a sub-pair and $E$ a prime divisor over $X$, i.e. a prime divisor on a normal variety $Y$ with a birational morphism $g:Y\rightarrow X$. We may write $K_Y+B_Y:=g^*(K_X+B)$. Then the center of $E$ is defined as the image of $E$ on $X$ under the morphism $g$ and the log discrepancy $a(E,X,B)$ of $E$ with respect to $(X,B)$ is defined as $1-\multlt_E B_Y$.

Let $(X/Z\ni z,B)$ be a sub-pair with contraction $\pi:X\rightarrow Z$. The minimal log discrepancy of $(X/Z\ni z,B)$ is defined as
$$\mld(X/Z\ni z,B):=\inf\{a(E,X,B) \mid \text{$E$ is a prime divisor over $X$ with $\pi(\centerr_X(E))=\overline{z}$}\}.$$
\end{definition}

\begin{definition}[cf. {\cite[Definition 2.7]{HJL22}}]
A sub-pair $(X/Z\ni z,B)$ is said to be $\epsilon$-lc (resp. $\epsilon$-klt, lc, klt) if $\mld(X/Z\ni z,B)\geq \epsilon$ (resp. $>\epsilon$,~$\geq 0$,~$> 0$). A sub-pair $(X,B)$ is said to be $\epsilon$-lc (resp. $\epsilon$-klt, lc, klt) if $(X\ni x,B)$ is so for any codimension $\geq 1$ point $x\in X$. In the case when $B=0$, we also say $X$ is $\epsilon$-lc (resp. $\epsilon$-klt, lc, klt). We remark that $(X/Z\ni z,B)$ is lc if and only if $(X,B)$ is lc over some neighborhood of $z\in Z$ (see \cite[Lemma 2.8]{HJL22}).
\end{definition}  

\begin{definition} [cf. {\cite[Definition 2.10]{HJL22}}]
Let $(X/Z\ni z,B)$ be a sub-pair with contraction $\pi:X\rightarrow Z$ and let $D$ be an effective $\RR$-Cartier $\RR$-divisor on $X$. The log canonical threshold of  $D$ with respect to  $(X/Z\ni z,B)$ is defined as 
$$\lct(X/Z\ni z,B;D):=\sup\{t\geq 0\mid (X/Z\ni z,B+tD) \text{ is lc}\}.$$
If $z\in Z$ is a codimension 1 point, then $\overline{z}$ is a Cartier divisor on some neighborhood $U$ of $z\in Z$. We define
$$\lct(X/Z\ni z,B;\pi^*\overline{z}):=\sup\{t\geq 0\mid (X/Z\ni z,B+t\pi^*\overline{z}) \text{ is lc over $U$}\}.$$
This definition is independent of the choice of $U$.
\end{definition}


\subsection{b-divisors}
Let $X$ be a normal variety. A b-$\RR$-divisor $\textbf{D}$ (or b-divisor for short) is a collection of $\RR$-divisors $\textbf{D}_Y$ for each birational model $Y$ over $X$, such that for any birational morphism $\sigma:Y_1\rightarrow Y_2/X$ we have $\sigma_*\textbf{D}_{Y_1}=\textbf{D}_{Y_2}$. 

We say that a b-divisor $\textbf{D}$ is b-semiample if there is a birational model $Y_0$ over $X$ such that $\textbf{D}_{Y_0}$ is a semiample $\RR$-Cartier $\RR$-divisor and for any birational morphism $\sigma:Y\rightarrow Y_0/X$ we have $\textbf{D}_{Y}=\sigma^* \textbf{D}_{Y_0}$. 

\subsection{Canonical bundle formula}
\begin{definition}[{\cite[Definition 3.2]{FG14}}]\label{def:canonical}
An lc-trivial fibration $\pi:(X,B)\rightarrow Z$ consists of a contraction $\pi:X\rightarrow Z$ between normal varieties and a sub-pair $(X,B)$ such that

(1)~ $(X/Z\ni \eta,B)$ is lc, where $\eta$ is the generic point of $Z$;

(2)~ $\rank \pi_* \sO_X(\lceil -B_{<1} \rceil)=1$, where $B_{<1}:=\sum_{b_i<1} b_i B_i$ for $B=\sum b_i B_i$ with prime divisors $B_i$'s;

(3)~ $K_X+B \sim_{\RR} 0/Z$, i.e. there is an $\RR$-Cartier $\RR$-divisor $L$ on $Z$ such that $K_X+B\sim_{\RR} \pi^*L$.

\end{definition}
\begin{remark}[cf. {\cite[Remark 2.13]{HJL22}}]\label{remark:effective}
If the generic fiber of $\pi$ is a rational curve, then the second condition is equivalent to that $B$ is effective over the generic point of $Z$.
\end{remark}

\begin{definition}[\cites{Kaw97,Kaw98}, \cite{Amb05}]\label{def:KKA}
Let $\pi:(X,B)\rightarrow Z$ be an lc-trivial fibration. Then  $K_X+B\sim_{\RR} \pi^* L$ for some $\RR$-Cartier $\RR$-divisor $L$ on $Z$. For any prime divisor $D$ on $Z$, denote
$$b_D:=\lct(X/Z\ni \eta_D, B; \pi^*\overline{\eta_D})$$
where $\eta_D$ is the generic point of $D$. We set $B_Z=\sum_D (1-b_D)D$ where $D$ runs over all prime divisors on $Z$. We set $M_Z=L-K_Z-B_Z$, which is determined up to $\RR$-linear equivalence since $L$ is so. Then we have the so-called canonical bundle formula
$$K_X+B\sim_{\RR} \pi^*(K_Z+B_Z+M_Z).$$
The part $B_Z$ is called the discriminant part and the part $M_Z$ is called the moduli part. We remark that $B_Z$ is effective if $B$ is so.

Let $\sigma: Z'\rightarrow Z$ be a birational morphism from a normal variety $Z'$ and let $X'$ be the resolution of the main component of $X\times_Z Z'$ with induced morphism $\tau:X'\rightarrow X$ and $\pi':X'\rightarrow Z'$. Write $K_{X'}+B'=\tau^*(K_X+B)$, then $K_{X'}+B'\sim_{\RR} \pi'^* \sigma^* L$. Similarly we can define the discriminant part $B_{Z'}$ and the moduli part $M_{Z'}$ for the contraction $(X',B')\rightarrow Z'$. One can check that $\sigma_* B_{Z'}=B_Z$ and $\sigma_* M_{Z'}=M_Z$. Hence there exist b-divisors $\textbf{B}$ and $\textbf{M}$ such that $\textbf{B}_{Z'}=B_{Z'}$ and $\textbf{M}_{Z'}=M_{Z'}$ for any birational model $Z'$ over $Z$, which are called the discriminant b-divisor and the moduli b-divisor respectively.
\end{definition}

\begin{theorem}[{\cite[Theorem 8.1]{PS09}}, cf. {\cite[Theorem 2.14]{HJL22}}] Let $\pi:(X,B)\rightarrow Z$ be an lc-trivial fibration such that 
$B$ is a $\QQ$-divisor, $\dim X-\dim Z=1$ and the generic fiber of $\pi$ is a rational curve. Then the moduli b-divisor ${\bf M}$ is b-semiample.
\end{theorem}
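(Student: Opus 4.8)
This is the statement that for an lc-trivial fibration $\pi:(X,B)\to Z$ with $B$ a $\QQ$-divisor, relative dimension one, and rational curve generic fiber, the moduli b-divisor $\mathbf{M}$ is b-semiample. The plan is to reduce, via base change and the standard covering trick, to the case where the fibration is birational to a $\PP^1$-bundle, and then to identify the moduli part explicitly as the pullback of a fixed divisor on the moduli space of the relevant log pairs on $\PP^1$ (a moduli of weighted points), which carries a tautological semiample polarization.

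First I would record the birational invariance of $\mathbf{M}$ already noted in Definition \ref{def:KKA}: it suffices to prove b-semiampleness after any birational base change $Z'\to Z$, since b-semiampleness is a statement about some high enough model. So I may freely replace $Z$ by a resolution and $X$ by a resolution of the main component of the fiber product. After such a replacement I would arrange that $\pi$ has a nice structure over a big open subset $U\subseteq Z$: the generic fiber being a rational curve, after shrinking and base change we get a $\PP^1$-bundle, and the horizontal part of $B$ becomes a divisor of sections. The key point is that the moduli b-divisor is insensitive to the vertical part of $B$ (it only contributes to the discriminant), so $M_{Z'}$ is governed entirely by the configuration of the weighted marked points cut out by $B^{\mathrm{hor}}$ on the fibers $\PP^1$.

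Next I would invoke the theory of the moduli part for families of stable weighted pointed rational curves. Concretely, after a further finite base change $Z''\to Z'$ to kill monodromy and make the marked points into disjoint sections (Kawamata's covering trick, using that $B$ has rational coefficients so the weights are rational), the family $(\PP^1, \text{weighted sections})\to Z''$ is classified by a morphism $\phi:Z''\to \overline{M}$ to a suitable moduli stack/space of weighted stable rational curves, and the moduli part $M_{Z''}$ equals $\phi^*\Lambda$ for $\Lambda$ a $\QQ$-divisor class on $\overline{M}$ which is semiample — indeed for genus zero it is pulled back from a projective GIT quotient and is even ample on the relevant locus, or one can use the explicit positivity of the Hodge-type line bundle $\lambda$ that appears in the canonical bundle formula for elliptic or hyperelliptic-type constructions; in the $\PP^1$ case one passes to a double cover to land in a Hodge bundle situation as in Prokhorov–Shokurov \cite{PS09}. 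Hence $M_{Z''}$ is a semiample $\QQ$-Cartier divisor on the chosen model, which is exactly what b-semiampleness of $\mathbf M$ asserts, after descending along the finite map $Z''\to Z'$ (semiampleness descends along finite surjective morphisms).

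The main obstacle I expect is the bookkeeping needed to reduce an arbitrary lc-trivial fibration with rational curve fibers to the $\PP^1$-bundle-with-sections normal form while keeping track of how $B$, and therefore $M$, transform: one must ensure that the birational modifications and finite base changes used do not alter the moduli b-divisor (beyond the controlled pullback behavior of Definition \ref{def:KKA}), and that the weights remain rational so the moduli-of-weighted-points machinery (and the associated semiample tautological class) genuinely applies. Once the family is in normal form, the semiampleness is essentially the statement that a natural polarization on a genus-zero moduli space is semiample, which is classical. Since this is precisely \cite[Theorem 8.1]{PS09}, in the write-up I would either cite that theorem directly or reproduce the reduction above in the level of detail needed for our application.
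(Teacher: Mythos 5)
The paper does not prove this statement at all: it is imported verbatim from \cite[Theorem 8.1]{PS09} (see also \cite[Theorem 2.14]{HJL22}), so the ``paper's approach'' is simply the citation. Your sketch is a reasonable outline of the strategy behind the cited result (reduction to a standard $\PP^1$-bundle form, covering tricks, and positivity of the tautological class on the moduli of weighted pointed rational curves), and you correctly conclude by deferring to the citation, which is exactly what the paper does.
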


\section{Log canonical thresholds on smooth surfaces}
In this section, we use Newton diagrams to study the local problem on estimating the lower bound of the log canonical threshold of a smooth curve with respect to a pair on a smooth surface germ, which will play a key role in the proof of Theorem \ref{thm:main}.
\subsection{Newton diagrams}
In this subsection we collect some basic definitions and facts about Newton diagrams.
\begin{definition}\label{def:newton}
Let $f\in \CC[[x,y]]$ be a nonzero power series in two variables. We may write
$$f=\sum_{(p,q)\in \ZZ_{\geq 0}} c_{p,q} x^p y^q.$$
Define the Newton polytope $\Gamma_+(f)$ of $f$ as the convex hull in $\RR^2$ of 
$$\bigcup_{c_{p,q}\neq 0} (p,q)+\RR_{\geq 0}^2$$
and define the Newton diagram $\Gamma(f)$ of $f$ as the boundary of $\Gamma_+(f)$. Then $\Gamma(f)$ is the union of 1-dimensional faces of $\Gamma_+(f)$ (including two non-compact faces).

Let $X\ni P$ be a germ of smooth surface with local coordinates $(x,y)$ and let $B$ be an effective $\RR$-divisor on $X$. Write $B=\sum_i b_i B_i$ where each $b_i$ is a positive real number and each $B_i$ is a prime divisor. The completion of $B_i$ along $P$ is defined by $(f_i=0)$ for some $f_i\in\CC[[x,y]]$. We define the Newton polytope $\Gamma_+(B)$ of $B$ as $\sum_i b_i\cdot \Gamma_+(f_i)$ and define the Newton diagram $\Gamma(B)$ as the boundary of $\Gamma_+(B)$. Note that the definition of $\Gamma_+(B)$ and $\Gamma(B)$ do not depend on the choice of $f_i$ up to a unit in $\CC[[x,y]]$, but depend on the choice of local coordinates.
\end{definition}

\begin{definition}\label{def:leftrightvertex}
Let $B$ be an effective $\RR$-divisor on a germ of smooth surface $X\ni P$ with local coordinates $(x,y)$ and let $S$ be a 1-dimensional face of $\Gamma(B)$. If $S$ is a compact face, then it has a normal vector having positive coordinates. Write its two vertices as $(p_1,q_1)$ and $(p_2,q_2)$ with $p_1<p_2$ and $q_1>q_2$. We call $(p_1,q_1)$ the left vertex of $S$ and call $(p_2,q_2)$ the right vertex of $S$. If $S$ is a non-compact face, then $S$ is parallel to either $x$-axis or $y$-axis. So we may write $S$ as either $(p_1+\RR_{\geq 0},q_1)$ or $(p_2,q_2+\RR_{\geq 0})$. In the former (resp. latter) case we call $(p_1,q_1)$ (resp. $(0,+\infty)$) the left vertex of $S$ and call $(+\infty,0)$ (resp. $(p_2,q_2)$) the right vertex of $S$. 
\end{definition}


Next we recall some facts about weighted blow-ups. Let $X\ni P$ be a germ of smooth surface with local coordinates $(x,y)$ and let $\textbf{w}=(w_1,w_2)$ with two coprime positive integers $w_1,w_2$. Denote by $E_{\textbf{w}}$ the exceptional divisor obtained by the weighted blow up $\sigma:Y\rightarrow X$ at $P\in X$ with $\wt(x,y)=(w_1,w_2)$. Then 
$$a(E_{\textbf{w}},X,0)=w_1+w_2=\langle  \textbf{w},\textbf{1}\rangle,$$
where we write $\textbf{1}$ for the vector $(1,1)$ and write $\langle, \rangle$ for the inner product of $\RR^2$. Let $B$ be an effective $\RR$-divisor on $X$. Then 
$$\mult_{E_{\textbf{w}}} \sigma^*B=\langle \textbf{w}, \Gamma(B) \rangle:=\inf\{\langle \textbf{w}, \textbf{p} \rangle\mid \textbf{p} \in \Gamma(B)\}.$$
Therefore 
\begin{equation}\label{eqNewton:surface}
a(E_{\textbf{w}},X,B)=\langle \textbf{w},\textbf{1}\rangle-\langle \textbf{w},\Gamma(B)\rangle.
\end{equation}

\begin{lemma}\label{lemma:1 in Newton}
With the above notations, if $\mld(X\ni P, B)\geq 0$, then ${\bf 1} \in \Gamma_+(B)$.
\end{lemma}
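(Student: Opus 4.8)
The plan is to argue by contradiction via the weighted blow-up formula \eqref{eqNewton:surface}. Suppose $\mathbf{1}\notin\Gamma_+(B)$. Since $\Gamma_+(B)$ is a closed convex set of the form $S+\RR_{\geq 0}^2$, a point lies outside it precisely when there is a weight vector $\mathbf{w}=(w_1,w_2)$ with positive coordinates such that $\langle\mathbf{w},\mathbf{1}\rangle<\langle\mathbf{w},\Gamma_+(B)\rangle=\langle\mathbf{w},\Gamma(B)\rangle$; this is just the separating-hyperplane statement for the convex region, using that the ``visible'' boundary of $\Gamma_+(B)$ from the origin is exactly the Newton diagram $\Gamma(B)$. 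First I would make this separation explicit: pick the supporting line of $\Gamma_+(B)$ through the face (or vertex) of $\Gamma(B)$ minimizing the linear functional in the direction of $\mathbf{1}$, and read off from it a rational normal direction, which after clearing denominators gives coprime positive integers $w_1,w_2$.

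Next I would invoke \eqref{eqNewton:surface}: for the divisor $E_{\mathbf{w}}$ obtained by the weighted blow-up of $X\ni P$ with $\wt(x,y)=(w_1,w_2)$, we have
\[
a(E_{\mathbf{w}},X,B)=\langle\mathbf{w},\mathbf{1}\rangle-\langle\mathbf{w},\Gamma(B)\rangle<0.
\]
Since $E_{\mathbf{w}}$ is a genuine prime divisor over $X$ with center $P$, this says $a(E_{\mathbf{w}},X,B)<0$, contradicting the hypothesis $\mld(X\ni P,B)\geq 0$ (which forces every log discrepancy of a divisor over $X$ centered at $P$, and more generally everywhere on the germ, to be $\geq 0$). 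Hence $\mathbf{1}\in\Gamma_+(B)$.

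I would take a little care with two points. Strictness of the inequality: the key observation is that $\mathbf{1}\notin\Gamma_+(B)$ with $\Gamma_+(B)$ closed and convex means there is a genuinely separating functional, not merely a supporting one, so I get the strict sign $\langle\mathbf{w},\mathbf{1}\rangle<\langle\mathbf{w},\Gamma(B)\rangle$ rather than $\leq$. One can also handle the degenerate cases where $\Gamma(B)$ has noncompact faces (so some $w_i$ could a priori want to be $0$) by noting that if $\mathbf{1}$ is below, say, the horizontal noncompact ray of $\Gamma_+(B)$ then in fact the whole first quadrant translate does not contain $\mathbf{1}$, and one still finds a strictly separating $\mathbf{w}$ with both coordinates positive by tilting slightly; alternatively, if $\Gamma_+(B)$ meets either axis, one checks directly that $\mathbf{1}\in\Gamma_+(B)$ is automatic unless $B$ has a very small coefficient along a coordinate axis, which again is controlled by the $\mld\geq 0$ assumption via the blow-up of the point with a suitable weight.

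The main obstacle I anticipate is purely bookkeeping rather than conceptual: making the convex-geometry separation argument airtight in the presence of the noncompact faces of $\Gamma(B)$ and ensuring the separating weight can always be chosen with coprime \emph{positive} integer entries (so that $E_{\mathbf{w}}$ is the honest weighted-blow-up divisor to which \eqref{eqNewton:surface} applies). Once the right $\mathbf{w}$ is produced, the rest is immediate from \eqref{eqNewton:surface}.
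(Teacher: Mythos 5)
Your proof is correct and takes essentially the same approach as the paper's: argue by contradiction, extract a separating weight vector $\mathbf{w}$ with positive coordinates (perturbing/tilting to handle axis-parallel faces and to reach coprime positive integers), and then apply the weighted blow-up formula \eqref{eqNewton:surface} to produce a divisor of negative log discrepancy, contradicting $\mld(X\ni P,B)\geq 0$. The paper's version is terser — it just asserts the existence of a 1-dimensional face of $\Gamma(B)$ with normal vector $\mathbf{e}$ satisfying $e_1+e_2<\langle\mathbf{e},\Gamma(B)\rangle$ and then perturbs — but the underlying convex-separation and perturbation steps are the ones you spelled out.
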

\begin{proof}
If $\textbf{1}\notin \Gamma_+(B)$, there exists a  1-dimensional face of $\Gamma(B)$ with normal vector $\textbf{e}=(e_1,e_2)$ such that $e_1+e_2< \langle \textbf{e},\Gamma(B) \rangle$. 
After some small perturbations, we may suppose that $e_1,e_2$ are positive rational numbers and the above inequality still holds. Multiplying with some positive rational number, we may suppose that $e_1,e_2$ are coprime positive integers. Let $E_{\textbf{e}}$ be the exceptional divisor obtained by the weighted blow up at $P\in X$ with $\wt(x,y)=(e_1,e_2)$. Then
$a(E_{\textbf{e}},X,B)<0$,  which contradicts our hypothesis.
\end{proof}

\begin{lemma}\label{lemma:translation}
Let $B,D$ be two effective $\RR$-divisors on a germ of smooth surface $X \ni P$ with local coordinates $(x,y)$. Assume $\Gamma(D)$ has only two vertices $(p,0)$ and $(0,q)$, where $p,q$ are positive numbers. Let $S$ be a 1-dimensional face of $\Gamma(B+tD)$ with left vertex $(p_1,q_1)$ and right vertex $(p_2,q_2)$, where $p_1,q_1,p_2,q_2$ are non-negative real numbers or $+\infty$. 
\begin{enumerate}[(1)]
\item If $(q_1-q_2)/(p_2-p_1)>q/p$, then $S-(0,tq)$ is a face of $\Gamma(B)$.
\item If $(q_1-q_2)/(p_2-p_1)<q/p$, then $S-(tp,0)$ is a face of $\Gamma(B)$.
\item If $(q_1-q_2)/(p_2-p_1)=q/p$, denote $(p'_1,q'_1)=(p_1,q_1-tq)$ and $(p'_2,q'_2)=(p_2-tp,q_2)$, then the line segment connecting $(p'_1,q'_1)$ and $(p'_2,q'_2)$ is a face of $\Gamma(B)$ (or a vertex of $\Gamma(B)$ when these two points coincide). Moreover we have 
$$\langle{\bf v},\Gamma(B)\rangle=p'_1q+q'_1p=p'_2q+q'_2p,$$
where ${\bf v}=(q,p)$.
\end{enumerate}
\end{lemma}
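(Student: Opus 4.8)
\textbf{Proof proposal for Lemma \ref{lemma:translation}.}

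The plan is to work directly from the definition of $\Gamma_+(B+tD)=\Gamma_+(B)+t\,\Gamma_+(D)$ as a Minkowski sum of Newton polytopes, using the hypothesis that $\Gamma_+(D)$ is the polytope with the single compact face from $(p,0)$ to $(0,q)$, i.e. the face whose outer normal direction is ${\bf v}=(q,p)$. The central observation is the standard fact about Minkowski sums: for any vector ${\bf w}$ with positive coordinates, the face of $\Gamma_+(B+tD)$ on which $\langle {\bf w},\cdot\rangle$ is minimized is the Minkowski sum of the corresponding minimizing faces of $\Gamma_+(B)$ and $t\,\Gamma_+(D)$. Now $t\,\Gamma_+(D)$ has exactly three "minimizing faces" as ${\bf w}$ varies: the vertex $(tp,0)$ (for slopes $-w_1/w_2 < -q/p$, i.e. when the normal is steeper than ${\bf v}$ in one sense), the vertex $(0,tq)$ (for the complementary range of slopes), and the whole segment from $(tp,0)$ to $(0,tq)$ (precisely when ${\bf w}$ is proportional to ${\bf v}=(q,p)$).

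First I would take a compact $1$-dimensional face $S$ of $\Gamma(B+tD)$ with left vertex $(p_1,q_1)$ and right vertex $(p_2,q_2)$; its outer normal ${\bf w}=(w_1,w_2)$ satisfies $w_1/w_2=(q_1-q_2)/(p_2-p_1)$ (after noting $w_1(p_2-p_1)=w_2(q_1-q_2)$ from the face condition). In case (1), $(q_1-q_2)/(p_2-p_1)>q/p$ means the normal of $S$ is steeper than ${\bf v}$, so the minimizing face of $t\,\Gamma_+(D)$ in direction ${\bf w}$ is the single vertex $(0,tq)$; hence $S$ equals (a face of $\Gamma(B)$ minimizing in direction ${\bf w}$) translated by $(0,tq)$, which is exactly the assertion that $S-(0,tq)$ is a face of $\Gamma(B)$. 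Case (2) is symmetric, with the minimizing vertex of $t\,\Gamma_+(D)$ being $(tp,0)$. Case (3) is the borderline case: ${\bf w}$ is proportional to ${\bf v}=(q,p)$, so the minimizing face of $t\,\Gamma_+(D)$ is the \emph{entire} segment from $(tp,0)$ to $(0,tq)$, which has length vector $(-tp,tq)$. Writing $S$ as a Minkowski sum $S = S_B + (\text{segment of }t\Gamma_+(D))$ where $S_B$ is the minimizing face of $\Gamma(B)$ in direction ${\bf v}$, one reads off that the left vertex of $S$ is (left vertex of $S_B$)$+(0,tq)$ and the right vertex of $S$ is (right vertex of $S_B$)$+(tp,0)$; inverting gives $(p'_1,q'_1)=(p_1,q_1-tq)$ and $(p'_2,q'_2)=(p_2-tp,q_2)$ as the two endpoints of $S_B$, which may coincide if $S_B$ is a vertex. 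The displayed equality $\langle {\bf v},\Gamma(B)\rangle = p'_1 q + q'_1 p = p'_2 q + q'_2 p$ is then immediate since $(p'_1,q'_1)$ and $(p'_2,q'_2)$ both lie on the face $S_B$ of $\Gamma(B)$ that minimizes $\langle {\bf v},\cdot\rangle$, and $\langle {\bf v},(a,b)\rangle = qa+pb$.

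I would also dispose of the degenerate possibilities for non-compact faces and infinite coordinates separately but quickly: if $S$ is non-compact (one of $p_i,q_i$ is $+\infty$), the "slope" $(q_1-q_2)/(p_2-p_1)$ is either $0$ or $+\infty$, so it falls into case (1) or (2) unless $q/p$ is also $0$ or $\infty$ — impossible since $p,q>0$ — and the same Minkowski-sum argument applies to the unbounded face, with the translation only affecting the finite vertex. The main obstacle, and the only place requiring genuine care, is making the Minkowski-sum/face-correspondence bookkeeping rigorous at the level of vertices rather than just faces: one must check that the left vertex of a Minkowski sum of two faces is the sum of the left vertices (and similarly on the right), which is true here because both summand faces have normal fans refining into the same half-plane of "positive" normals and the left/right vertex convention is consistent with ordering by the first coordinate. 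Everything else is a direct translation of convex-geometry facts, and no serious estimate or new idea is needed beyond what is already set up in the paragraph preceding the lemma.
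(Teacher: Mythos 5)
Your proof is correct and uses essentially the same idea as the paper, namely that $\Gamma_+(B+tD)=\Gamma_+(B)+t\,\Gamma_+(D)$ as a Minkowski sum and that faces of a Minkowski sum decompose as sums of the corresponding supporting faces of the summands; the paper states this more tersely as a "gluing" of the Newton diagrams with a reference to \cite{GLS07}, whereas you make the vertex-by-vertex bookkeeping explicit, including the observation that in case (3) the face of $\Gamma(B)$ may degenerate to a vertex. The argument checks out.
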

\begin{proof}
The Newton diagram $\Gamma(B+tD)$ is obtained by gluing the 1-dimensional faces of $\Gamma(B)$ and $t\cdot\Gamma(D)$, suitably displaced such that the slopes of faces are more and more gentle form left to right (cf. \cite[Chapter II, Remark 2.17.1]{GLS07}). See the figure below. The lemma follows directly from this fact.
\end{proof}

\begin{figure}[ht]
\begin{tikzpicture}[scale=0.6] 
\draw[thick,-] (0,0) -- (4.5,0);
\draw[thick,-] (0,1) -- (4.5,1);
\draw[thick,-] (0,2) -- (4.5,2);
\draw[thick,-] (0,3) -- (4.5,3);
\draw[thick,-] (0,4) -- (4.5,4);
\draw[thick,-] (0,0) -- (0,4.5);
\draw[thick,-] (1,0) -- (1,4.5);
\draw[thick,-] (2,0) -- (2,4.5);
\draw[thick,-] (3,0) -- (3,4.5);
\draw[thick,-] (4,0) -- (4,4.5);
\draw[thick,-][line width=2.25][color=red] (0,2)--(2,0);
\node[] at (6,3) {+};
\node[] at (2,-1) {$\Gamma(D)$, $D=(x^2+y^2=0)$};
\end{tikzpicture}
\begin{tikzpicture}[scale=0.6] 
\draw[thick,-] (0,0) -- (4.5,0);
\draw[thick,-] (0,1) -- (4.5,1);
\draw[thick,-] (0,2) -- (4.5,2);
\draw[thick,-] (0,3) -- (4.5,3);
\draw[thick,-] (0,4) -- (4.5,4);
\draw[thick,-] (0,0) -- (0,4.5);
\draw[thick,-] (1,0) -- (1,4.5);
\draw[thick,-] (2,0) -- (2,4.5);
\draw[thick,-] (3,0) -- (3,4.5);
\draw[thick,-] (4,0) -- (4,4.5);
\draw[thick,-][line width=2.25][color=blue] (0,3)--(1,1);
\draw[thick,-][line width=2.25][color=blue] (1,1)--(4,0);
\node[] at (6,3) {=};
\node[] at (2,-1) {$\Gamma(B)$, $B=(x^4+xy+y^3=0)$};
\end{tikzpicture}
\qquad
\begin{tikzpicture}[scale=0.6] 
\draw[thick,-] (0,0) -- (6.5,0);
\draw[thick,-] (0,1) -- (6.5,1);
\draw[thick,-] (0,2) -- (6.5,2);
\draw[thick,-] (0,3) -- (6.5,3);
\draw[thick,-] (0,4) -- (6.5,4);
\draw[thick,-] (0,5) -- (6.5,5);
\draw[thick,-] (0,6) -- (6.5,6);

\draw[thick,-] (0,0) -- (0,6.5);
\draw[thick,-] (1,0) -- (1,6.5);
\draw[thick,-] (2,0) -- (2,6.5);
\draw[thick,-] (3,0) -- (3,6.5);
\draw[thick,-] (4,0) -- (4,6.5);
\draw[thick,-] (5,0) -- (5,6.5);
\draw[thick,-] (6,0) -- (6,6.5);

\draw[thick,-][line width=2.25][color=blue] (0,5)--(1,3);
\draw[thick,-][line width=2.25][color=red] (1,3)--(3,1);
\draw[thick,-][line width=2.25][color=blue] (3,1)--(6,0);
\node[] at (3.5,-1) {$\Gamma(B+D)$};
\end{tikzpicture}

\end{figure}

\begin{lemma}\label{lemma:t2}
Let $B,D$ be two effective $\RR$-divisors on a germ of smooth surface $X \ni P$ with local coordinates $(x,y)$. Assume $\Gamma(D)$ has only one vertex $(p,0)$ (resp.$(0,q)$), where $p$ (resp. $q$) is a positive number. Let $S$ be a 1-dimensional face of $\Gamma(B+tD)$. Then $S-(tp,0)$ (resp. $S-(0,tq)$) is a face of $\Gamma(B)$.
\end{lemma}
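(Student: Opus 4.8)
The plan is to show that, under the hypothesis, the polytope $\Gamma_+(B+tD)$ is simply the translate of $\Gamma_+(B)$ by the vector $(tp,0)$ (resp. $(0,tq)$); the assertion on $1$-dimensional faces then follows at once. I will carry out the case where $\Gamma(D)$ has the single vertex $(p,0)$, the other case being obtained by interchanging the roles of $x$ and $y$.

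First I would record three elementary properties of Newton polytopes. (i) \emph{Additivity.} Writing effective divisors through their prime components and using $\Gamma_+(B)=\sum_i b_i\Gamma_+(f_i)$, one checks $\Gamma_+(B+tD)=\Gamma_+(B)+t\,\Gamma_+(D)$ as Minkowski sums; the only bookkeeping is that when $B$ and $tD$ share a prime component their coefficients add, which matches the identity $c_1\Gamma_+(f_i)+c_2\Gamma_+(f_i)=(c_1+c_2)\Gamma_+(f_i)$ for the corresponding dilates of a single convex set. (ii) \emph{Stability.} By definition $\Gamma_+(B)+\RR_{\geq 0}^2=\Gamma_+(B)$, since $\Gamma_+(f_i)+\RR_{\geq 0}^2=\Gamma_+(f_i)$ for every $i$. (iii) \emph{Single vertex.} If $\Gamma(D)$ has only the vertex $(p,0)$, then its two $1$-dimensional faces are the non-compact rays $(p,0)+\RR_{\geq 0}(1,0)$ and $(p,0)+\RR_{\geq 0}(0,1)$, so $\Gamma_+(D)=(p,0)+\RR_{\geq 0}^2$ and hence $t\,\Gamma_+(D)=(tp,0)+\RR_{\geq 0}^2$.

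Combining (i)--(iii) gives
\[
\Gamma_+(B+tD)=\Gamma_+(B)+\bigl((tp,0)+\RR_{\geq 0}^2\bigr)=\bigl(\Gamma_+(B)+\RR_{\geq 0}^2\bigr)+(tp,0)=\Gamma_+(B)+(tp,0),
\]
so $\Gamma_+(B+tD)$ is literally a translate of $\Gamma_+(B)$. A translation of a convex polyhedral set carries its $1$-dimensional faces bijectively onto the translates of the $1$-dimensional faces of the original set; hence $\Gamma(B+tD)=\Gamma(B)+(tp,0)$ as unions of faces, and if $S$ is a $1$-dimensional face of $\Gamma(B+tD)$ then $S-(tp,0)$ is a $1$-dimensional face of $\Gamma(B)$, as claimed. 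The case of the vertex $(0,q)$ is identical with $(tp,0)$ replaced by $(0,tq)$.

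There is no substantial obstacle here: the statement is essentially a reformulation of the Minkowski-sum description of $\Gamma_+$, and the only points needing a line of care are the coefficient bookkeeping in (i) and the explicit shape of a Newton polytope with a single vertex in (iii). Alternatively, one can read the lemma off the ``glue the $1$-dimensional faces in increasing order of slope'' picture used in the proof of Lemma~\ref{lemma:translation}: the diagram $t\,\Gamma(D)$ contributes only a vertical ray (slope $-\infty$, placed at the far left) and a horizontal ray (slope $0$, placed at the far right), so every face inherited from $\Gamma(B)$ is pushed uniformly to the right by $tp$.
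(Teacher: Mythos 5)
Your proof is correct and follows essentially the same route as the paper, which simply cites the Minkowski-sum identity $\Gamma_+(B+tD)=\Gamma_+(B)+t\cdot\Gamma_+(D)$ and declares the lemma immediate; you spell out why, under the single-vertex hypothesis, that sum is a pure translate of $\Gamma_+(B)$, which is exactly the implicit content of the paper's one-line argument.
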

\begin{proof}
The lemma follows directly from the fact that $\Gamma_+(B+tD)=\Gamma_+(B)+t\cdot\Gamma_+(D)$.
\end{proof}

Let $X\ni P$ be a germ of smooth surface with local coordinates $(x,y)$ and let $C$ be a smooth curve on $X$ passing $P$. The completion of $C$ along $P$ is defined by $(g=0)$ for some $g\in \CC [[x,y]]$. Since $C$ is smooth, after possibly switching $x$ and $y$, we may suppose that the term $x$ appears in the expression of $g$ with non-zero coefficient. Therefore $\Gamma(C)$ has either one vertex $(1,0)$ or two vertices $(1,0)$ and $(0,b)$, where $b$ is a positive integer. In the former case, we also say $\Gamma(C)$ has two vertices $(1,0)$ and $(0,+\infty)$.

\begin{lemma}\label{lemma:intersection}
Let $B$ be an effective $\RR$-divisor on a germ of smooth surface $X \ni P$ with coordinates $(x,y)$ and let $C$ be a smooth curve passing $P$. Suppose that $C \nsubseteq \Supp B$ and $\Gamma(C)$ has two vertices $(1,0)$ and $(0,b)$, where $b$ is a positive integer or $+\infty$. Then 
$$(B\cdot C)_P\geq \langle {\bf b}, \Gamma(B)  \rangle$$
where ${\bf b}=(b,1)$. Here we use the convention that $+\infty\cdot 0=0$.
\end{lemma}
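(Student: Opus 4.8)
\textbf{Proof plan for Lemma~\ref{lemma:intersection}.}
The plan is to reduce the statement to the case where $\Gamma(B)$ is a single point, and then verify the inequality directly by computing the intersection multiplicity from the lowest-weight terms. First I would reduce to the case $B = b_0 B_0$ with $B_0$ a prime divisor (using $(\sum_i b_i B_i \cdot C)_P = \sum_i b_i (B_i \cdot C)_P$ and additivity of $\Gamma_+$ under Minkowski sum, which also makes $\langle \mathbf{b}, \Gamma(\cdot)\rangle$ additive). So it suffices to show: if $B_0$ is defined by $f \in \CC[[x,y]]$ with $C \nsubseteq \Supp B_0$, then $(B_0 \cdot C)_P \geq \langle \mathbf{b}, \Gamma(f)\rangle$, where $\langle \mathbf{b}, \Gamma(f)\rangle = \min\{ bp + q \mid (p,q) \in \Gamma(f) \}$.

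Next, recall that $C$ is cut out by $g \in \CC[[x,y]]$ with $g = x + (\text{higher terms})$; since $C$ is smooth, $\CC[[x,y]]/(g) \cong \CC[[y]]$ via a parametrization $x = \varphi(y)$ with $\ord_y \varphi \geq 1$ (indeed $\geq b$ if $b < \infty$, by the shape of $\Gamma(C)$, and the convention $+\infty \cdot 0 = 0$ handles the $b = \infty$ case where the statement is just $(B_0\cdot C)_P \geq \ord_y(\text{restriction})$, trivially true). Then $(B_0 \cdot C)_P = \ord_y f(\varphi(y), y)$. For each monomial $c_{p,q} x^p y^q$ appearing in $f$, substitution gives $c_{p,q}\varphi(y)^p y^q$, which has $y$-order $\geq p\cdot\ord_y\varphi + q \geq bp + q \geq \langle \mathbf{b},\Gamma(f)\rangle$. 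Hence $\ord_y f(\varphi(y),y) \geq \langle \mathbf{b}, \Gamma(f)\rangle$, which is the desired bound. I would phrase this cleanly using the weight function $\wt_{\mathbf b}$ with $\wt_{\mathbf b}(x) = b$, $\wt_{\mathbf b}(y) = 1$: every term of $f(\varphi(y),y)$ has $y$-order at least the $\mathbf{b}$-weight of the corresponding term of $f$.

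The main obstacle is bookkeeping around the parametrization of $C$ and the value $b$: one must confirm that the hypothesis "$\Gamma(C)$ has vertices $(1,0)$ and $(0,b)$" forces $\ord_y \varphi \geq b$ (so that the weight $(b,1)$ — rather than just $(1,1)$ — is legitimate), and that the degenerate cases ($b = +\infty$, or $C$ tangent to $\{x=0\}$ to some finite order) are covered by the stated conventions. A secondary subtlety is that passing to completions is harmless because both the intersection multiplicity and the Newton polytope are computed formally-locally at $P$; I would note this at the outset. No cancellation issue arises for the inequality (only the lower bound is claimed), so no genericity of $f$ is needed — this is exactly why the statement is an inequality rather than an equality.
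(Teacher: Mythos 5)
Your argument is correct and follows essentially the same route as the paper: reduce to $B$ prime, parametrize the smooth curve $C$, and bound the order of each monomial of $f$ after substitution from below by its $(b,1)$-weight. The paper uses the Puiseux-type parametrization $t\mapsto(t^b,y(t))$ with $\ord_t y(t)=1$, whereas you use the graph parametrization $y\mapsto(\varphi(y),y)$ with $\ord_y\varphi\geq b$; these encode the same weight information (and your flagged claim $\ord_y\varphi\geq b$ does follow from the stated shape of $\Gamma(C)$), so the two proofs are interchangeable.
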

\begin{proof}
We may suppose that $B$ is a prime divisor with its completion defined by $(f=0)$ for some $f\in\CC[[x,y]]$. Indeed, for the general case $B=\sum_i b_iB_i$, since $(B\cdot C)_P=\sum_i b_i(B_i\cdot C)_P$ and $\Gamma_+(B)=\sum_i b_i\cdot \Gamma_+(B_i)$, it suffices to show the lemma for each $B_i$. 

If $b$ is a positive integer, by \cite[Chapter I, Theorem 3.3]{GLS07} there exists $y(t)\in t\cdot \CC[[t]]$ such that $\varphi:t\mapsto (t^b,y(t))$ is a parametrization of $C$. As $C$ is smooth, the multiplicity of $y(t)$ is 1. For any term $x^py^q$ appears in the expression of $f$ with non-zero coefficient, the multiplicity of $\varphi^*(x^py^q)=t^{pb}\cdot y(t)^q$ is $pb+q\geq \langle \textbf{b},\Gamma(B)\rangle$. Therefore $(B\cdot C)_P\geq \langle \textbf{b} ,\Gamma(B)\rangle$.

If $b=+\infty$, then $\varphi:t\mapsto (0,t)$ is a parametrization of $C$.  Since $C \nsubseteq \Supp B$,  there exists a non-negative integer $I$ such that $y^{I}$ appears in the expression of $f$ with non-zero coefficient. 
Then 
$$(B\cdot C)_P= \min\{I\mid (0,I)\in \Gamma_+(B)\}= \langle \textbf{b},\Gamma(B)\rangle,$$
where $\textbf{b}=(+\infty,1)$.
\end{proof}

\subsection{Proof of Theorem \ref{thm:surface}}
In this subsection we give the proof of Theorem \ref{thm:surface}.
\begin{lemma} \label{lemma:mld=0}
Let $(X\ni x,B)$ be an lc pair such that $x$ is a codimension 2 point. Let $D$ be a prime divisor on $X$. Denote $t=\lct(X\ni x,B;D)$. If $\mult_D B+t<1$, then $\mld(X\ni x, B+tD)=0$.
\end{lemma}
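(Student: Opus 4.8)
The plan is to pass to a log resolution and exploit the fact that the log canonical threshold of a surface pair is \emph{attained} by a prime divisor appearing on that resolution, and then to pin down which divisor attains it using the strict inequality $\mult_D B+t<1$.

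Since $x$ is a codimension $2$ point, I would first shrink $X$ so that it is an affine surface with $x$ a closed point. Two preliminary remarks: $\mult_D B+t<1$ forces $t<1$, so $t<\infty$; and if $x\notin D$ then $B+sD=B$ near $x$ for all $s$, so $(X\ni x,B+sD)$ is lc for every $s$ and $t=+\infty$, a contradiction --- hence $x\in D$. Now fix a log resolution $f\colon Y\to X$ of $(X,B+D)$. Since $\Supp(B+sD)\subseteq\Supp(B+D)$ for every $s>0$, this $f$ is simultaneously a log resolution of $(X,B+sD)$ for all $s\ge 0$, and writing
$$f^*(K_X+B+sD)=K_Y+\sum_i\bigl(1-a(E_i,X,B)+s\,\mult_{E_i}f^*D\bigr)E_i$$
over the prime divisors $E_i$ on $Y$, the sub-pair $(X\ni x,B+sD)$ is lc if and only if $s\,\mult_{E_i}f^*D\le a(E_i,X,B)$ for every $E_i$ with $x\in f(E_i)$ (divisors with $\mult_{E_i}f^*D=0$ impose no condition, because $(X,B)$ is lc so $a(E_i,X,B)\ge0$). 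Consequently
$$t=\lct(X\ni x,B;D)=\min\Bigl\{\,a(E_i,X,B)\big/\mult_{E_i}f^*D \;\Bigm|\; E_i\subset Y,\ \mult_{E_i}f^*D>0,\ x\in f(E_i)\,\Bigr\},$$
a minimum over a finite, nonempty set (nonempty since the strict transform of $D$ qualifies, as $x\in D$). In particular $(X\ni x,B+tD)$ is lc, so $\mld(X\ni x,B+tD)\ge 0$.

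Let $E^*$ be a prime divisor on $Y$ attaining this minimum, so $a(E^*,X,B+tD)=0$, $x\in f(E^*)$, and $\mult_{E^*}f^*D>0$. I claim $f(E^*)=\{x\}$; granting this, $\centerr_X(E^*)=\overline{x}$, hence $\mld(X\ni x,B+tD)\le a(E^*,X,B+tD)=0$, which with the reverse inequality gives $\mld(X\ni x,B+tD)=0$. To prove the claim, suppose $f(E^*)$ were a curve $\Gamma\ni x$. Since $X$ is normal and $f$ is a proper birational morphism, $f$ is an isomorphism over the generic point of $\Gamma$ (over that point $X$ is a DVR, and a proper birational modification of the spectrum of a DVR is trivial), so $E^*$ is the strict transform of $\Gamma$ and defines the same valuation as $\Gamma$. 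Moreover $0<\mult_{E^*}f^*D=\mult_\Gamma D$ forces $\Gamma\subseteq\Supp D$, and since $D$ is a prime divisor this gives $\Gamma=D$. But then $a(E^*,X,B+tD)=1-\mult_D(B+tD)=1-(\mult_D B+t)>0$ by hypothesis, contradicting $a(E^*,X,B+tD)=0$. This proves the claim and the lemma.

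The only inputs beyond formal manipulation are the expression of $\lct$ as a minimum over the divisors of a fixed log resolution and the normality fact that an exceptional prime divisor of a birational morphism of normal surfaces is contracted to a point; both are standard, so I do not anticipate a real obstacle. The content of the hypothesis $\mult_D B+t<1$ is precisely that the minimizing divisor cannot be $D$ itself, and the codimension $2$ hypothesis is what reduces the alternative ``$\centerr_X(E^*)$ is lower-dimensional'' to the single case ``$\centerr_X(E^*)$ is a curve'', which is what makes the argument short.
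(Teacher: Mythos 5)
Your proof is correct and takes essentially the same approach as the paper: pass to a log resolution of $(X,B+D)$, locate a divisor on it with log discrepancy $0$ with respect to $(X,B+tD)$, and use the hypothesis $\mult_D B+t<1$ to rule out that this divisor is the strict transform of $D$, hence it must be $f$-exceptional and so contracted to $x$. The only cosmetic difference is that you compute $t$ directly as a finite minimum and pick the minimizer, whereas the paper argues by contradiction (if no exceptional divisor had log discrepancy $0$, one could increase $t$); both use the coefficient bound at exactly the same point.
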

\begin{proof}
As $(X\ni x, B+tD)$ is lc, after shrinking $X$ near $x$, we may suppose that $(X,B+tD)$ is an lc pair. Let $f:Y\rightarrow X$ be a log resolution of $(X,B+D)$. We may write 
$$K_Y+\sum_i (1-a(E_i,X,B+tD))E_i+B_Y+tD_Y=f^*(K_X+B+tD)$$
where $B_Y$, $D_Y$ are the strict transforms of $B,D$ on $Y$ respectively and $E_i$ runs over all exceptional prime divisors on $Y$ over $X$. Shrinking $X$ near $x$, we may suppose that $\overline{x} \subseteq \centerr_X(E_i)$ for each $E_i$, which implies that $\centerr_X(E_i)=\overline{x}$ since $x$ is a codimension 2 point and $E_i$ is exceptional.

We claim that $a(E_i,X,B+tD)=0$ for some $E_i$. Indeed, if this is not the case, then $a(E_i,X,B+tD)>0$ for all $E_i$. Take $t'$ such that $0<t'-t\ll 1$. Then $a(E_i,X,B+t'D)>0$ for all $E_i$ and coefficients in $B_Y+t'D_Y$ are at most 1 since $\mult_D B+t<1$. As $B_Y+D_Y+\sum_i E_i$ is simple normal crossing, it follows that 
$$(Y,\sum_i (1-a(E_i,X,B+t'D))E_i+B_Y+t'D_Y)$$ 
is lc, which is the crepand pull back of $(X,B+t'D)$. Hence $\lct(X\ni P,B;D)\geq t'>t$, which leads to a contradiction.

By the above claim, there is $E_i$ such that $a(E_i,X,B+tD)=0$. Note that $(X,B+tD)$ is lc and  $\centerr_X(E_i)=\overline{x}$, we have $\mld(X\ni x, B+tD)=0$.
\end{proof}

\begin{proof}[Proof of Theorem \ref{thm:surface}]
Suppose that $\frac{1}{n+1}\leq \epsilon\leq \frac{1}{n}$ for some $n\in \NN$. Let $t=\lct(X\ni P,B;C)$. We need to show that 
$$t\geq \frac{(2n+1)\epsilon-1}{2n(n+1)}.$$
If $t\geq \epsilon$, there is nothing further to prove. So we may suppose that $t<\epsilon$. Then $\mld(X\ni P, B+tC)=0$ by Lemma \ref{lemma:mld=0}. By \cite[Theorem 1]{Kaw17}, there are local coordinates $x,y$ and two coprime positive integers $w_1,w_2$ such that 
\begin{equation}\label{eq4:surface}
a(E_{\textbf{w}},X,B+tC)=\mld(X\ni P, B+tC)=0,
\end{equation}
where $E_{\textbf{w}}$ is the exceptional divisor obtained by the weighted blow up at $P\in X$ with $\wt(x,y)=(w_1,w_2)$. For any $\mathbb R$-divisor $D$ on $X$, we denote by $\Gamma(D)$ its Newton diagram with respect to local coordinates $(x,y)$ (see Definition \ref{def:newton}).

From \eqref{eqNewton:surface} and \eqref{eq4:surface} we deduce that
\begin{equation}\label{eq3:surface}
\langle \textbf{w},\textbf{1}\rangle=\langle \textbf{w},\Gamma(B+tC)\rangle,
\end{equation}
where $\textbf{w}=(w_1,w_2)$. On the other hand, $\textbf{1}\in \Gamma_+(B+tC)$ by Lemma \ref{lemma:1 in Newton}. This implies that $$\textbf{1}\in\Gamma(B+tC).$$
\begin{claim} \label{claim:vertex}
If ${\bf 1}$ lies in a non-compact 1-dimensional face $F$ of $\Gamma(B+tC)$, then ${\bf 1}$ is a vertex of $F$.
\end{claim}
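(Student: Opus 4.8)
The plan is to argue by contradiction: suppose $\textbf{1}$ lies in the interior of a non-compact $1$-dimensional face $F$ of $\Gamma(B+tC)$. Such a face $F$ is a horizontal or vertical ray, so its direction is either $(1,0)$ or $(0,1)$. I would treat the horizontal case; the vertical case is symmetric under switching $x$ and $y$ (recall that the smooth curve $C$ has $x$ appearing in its defining series, so this switch is harmless only if we also re-examine $C$, hence I will keep $C$ fixed and argue directly). If $\textbf{1}$ is interior to a horizontal ray $F=\{(p,1)\mid p\geq p_0\}$ with $p_0<1$, then the point $(p_0,1)$ lies in $\Gamma_+(B+tC)$ and has first coordinate strictly less than $1$.

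The contradiction will come from the choice of coordinates: equation \eqref{eq3:surface} says $\langle \textbf{w},\textbf{1}\rangle=\langle \textbf{w},\Gamma(B+tC)\rangle$, i.e. $\textbf{1}$ minimizes $\langle \textbf{w},-\rangle$ over $\Gamma_+(B+tC)$, with $\textbf{w}=(w_1,w_2)$ a pair of positive integers. If $\textbf{1}$ is interior to a horizontal face of $\Gamma(B+tC)$, then moving left along that face decreases the first coordinate while keeping the second fixed, so it strictly decreases $\langle \textbf{w},-\rangle$ (since $w_1>0$); this contradicts minimality. So the first step is to record that $\textbf{1}$ being a minimizer of a strictly-positive linear functional forces it to be an extreme point of the $1$-dimensional face containing it — which is exactly the statement that it is a vertex of $F$. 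I expect this to be essentially immediate once the minimality reformulation of \eqref{eq3:surface} is in place, and the only care needed is the bookkeeping about which faces of $\Gamma(B+tC)$ are non-compact and in which direction they run (using Definition \ref{def:leftrightvertex}).

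More carefully: a non-compact $1$-dimensional face $F$ of $\Gamma_+(B+tC)$ lies on a supporting line with normal vector having a zero coordinate, i.e. normal $(0,1)$ (the face being the "bottom", a horizontal ray) or normal $(1,0)$ (a vertical ray). For $\textbf{1}$ to lie on $F$ and simultaneously minimize $\langle \textbf{w},-\rangle$ with both $w_i>0$, the supporting line of $F$ at $\textbf{1}$ must coincide with the supporting line $\{\langle \textbf{w},\textbf{p}\rangle=\langle\textbf{w},\textbf{1}\rangle\}$; but a line with normal $(0,1)$ or $(1,0)$ cannot equal a line with normal $\textbf{w}$ having both coordinates positive. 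Hence the minimum of $\langle\textbf{w},-\rangle$ on $F$ is attained only at the endpoint of the ray $F$ (its left vertex if $F$ is horizontal, its right vertex if $F$ is vertical), and since $\textbf{1}$ does attain this minimum on all of $\Gamma_+(B+tC)\supseteq F$, it must be that endpoint, i.e. a vertex of $F$.

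I do not anticipate a serious obstacle here — the claim is a convexity/linear-programming triviality given \eqref{eq3:surface}. The only thing to be slightly attentive about is the degenerate possibility that $F$ is both a horizontal ray and $\textbf{1}$ is its left endpoint already, or that $F$ might a priori be a ray whose endpoint has a coordinate equal to $+\infty$ in the conventions of Definition \ref{def:leftrightvertex}; in all such cases the conclusion "$\textbf{1}$ is a vertex of $F$" holds or the situation does not arise. So the write-up is: reformulate \eqref{eq3:surface} as "$\textbf{1}$ minimizes the positive functional $\langle\textbf{w},-\rangle$ on $\Gamma_+(B+tC)$", note the normal direction of a non-compact face has a zero entry hence is not proportional to $\textbf{w}$, and conclude the minimum on that face is attained only at its finite endpoint, which must therefore be $\textbf{1}$.
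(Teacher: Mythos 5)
Your proposal is correct and is essentially the paper's argument: the paper likewise uses equation \eqref{eq3:surface} to say that $\textbf{1}$ minimizes $\langle\textbf{w},-\rangle$ over $\Gamma_+(B+tC)$, and then observes (taking the vertical case $F=(1,s+\RR_{\geq 0})$) that if $\textbf{1}$ is not the endpoint then $s<1$, giving $w_1+sw_2<w_1+w_2$ and contradicting minimality. Your rephrasing in terms of the normal vector of the non-compact face being $(1,0)$ or $(0,1)$, hence never proportional to $\textbf{w}$ with both $w_i>0$, is the same idea packaged a bit more abstractly; the concrete verification (an interior point of such a face has a strictly smaller $\langle\textbf{w},-\rangle$ value than the endpoint) is identical.
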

\begin{proof}
Without loss of generality, we may suppose that $F$ is parallel to $y$-axis. We may write $F=(1,s+\RR_{\geq 0})$ since $\textbf{1}\in F$. If $\textbf{1}$ is not the vertex of $F$, then $0\leq s<1$. It follows that $w_1+sw_2<w_1+w_2$, which contradicts \eqref{eq3:surface}. 
\end{proof}

Take $S$ to be the unique 1-dimensional face of $\Gamma(B+tC)$ such that $\textbf{1}\in S$ and $\textbf{1}$ is not the right vertex of $S$ (see Definition \ref{def:leftrightvertex} for the definition of right vertex). Then $S$ is not parallel to $y$-axis by the above claim. We denote the left (resp. right) vertex of $S$ by $(p_1,q_1)$ (resp. $(p_2,q_2)$). The slope of $S$ is defined to be $$\slope(S):=(q_1-q_2)/(p_2-p_1).$$
Then $\slope(S)$ is a non-negative real number (since $S$ is not parallel to $y$-axis, the slope $<+\infty$). 

Since $C$ is a smooth curve passing $P$, after possibly switching $x$ and $y$, we may suppose that $\Gamma(C)$ has two vertices $(1,0)$ and $(0,b)$, where $b$ is a positive integer or $+\infty$ (see the argument before Lemma \ref{lemma:intersection}). Denote $a=\mult_C B$. Then $a\leq 1-\epsilon$. We divide the rest of the proof in three cases based on the slope of $S$.

\fbox{\textit{Case} 1:} $\slope(S)=b$. Then $0<b=\slope(S)<+\infty$. 
It follows from $\textbf{1} \in S$ that 
\begin{equation}\label{eq1:surface}
b+1=bp_2+q_2.
\end{equation}
Let $\textbf{b}=(b,1)$ and  $E_{\textbf{b}}$  the exceptional divisor obtained by the weighted blow up $\sigma:Y\rightarrow X$ at $P\in X$ with $\wt(x,y)=(b,1)$. Then $\mult_{E_{\textbf{b}}} \sigma^* C=\langle \textbf{b},\Gamma(C) \rangle=b$ and
$$a(E_{\textbf{b}},X,B+tC)=\langle \textbf{b},\textbf{1}\rangle-\langle \textbf{b},\Gamma(B+tC) \rangle= b+1-(bp_2+q_2)=0.$$ It follows that $a(E_{\textbf{b}},X,B)=tb$. As $\mld(X\ni P, B)\geq \epsilon$, we have $t\geq \epsilon/b$.

From Lemma \ref{lemma:translation}(3), Lemma \ref{lemma:intersection} and the condition that $(B-aC\cdot C)_P\leq 2$, we deduce that 
\begin{equation}\label{eq2:surface}
b(p_2-t-a)+q_2= \langle \textbf{b}, \Gamma(B-aC)\rangle\leq 2.
\end{equation}
Combining \eqref{eq1:surface} and  \eqref{eq2:surface}, we get
$$t\geq 1-a-\frac{1}{b}\geq \epsilon-\frac{1}{b}.$$
Therefore $t\geq \max\{\epsilon/b,\epsilon-1/b\}.$

If $b\geq 2n+1$, then 
$$t\geq\epsilon-\frac{1}{b}\geq\epsilon-\frac{1}{2n+1}=\frac{(2n+1)\epsilon-1}{2n+1} \geq \frac{(2n+1)\epsilon-1}{2n(n+1)}.$$
Otherwise  $b\leq 2n$, then
$$t\geq \frac{\epsilon}{b}\geq \frac{\epsilon}{2n}=\frac{(n+1)\epsilon}{2n(n+1)}\geq \frac{(n+1)\epsilon+(n\epsilon-1)}{2n(n+1)}=\frac{(2n+1)\epsilon-1}{2n(n+1)}$$
where the third inequality follows from $ \epsilon\leq 1/n$.

\fbox{\textit{Case} 2:} $\slope(S)=0$. By Claim \ref{claim:vertex} we may write $S=(1+\RR_{\geq 0},1)$. Then $(1-t+\RR_{\geq 0},1)$ is a face of $\Gamma(B)$ by Lemma \ref{lemma:translation}(2) when $b<+\infty$ and by Lemma \ref{lemma:t2} when $b=+\infty$. Denote $\textbf{s}=(1-t,1)$. Let $\textbf{p}=(1,N)$ and  $E_{\textbf{p}}$  the exceptional divisor obtained by the weighted blow up at $P\in X$ with $\wt(x,y)=(1,N)$, where $N$ is a sufficiently large integer. Then 
\begin{align*}
\epsilon\leq a(E_{\textbf{p}},X,B) &=\langle \textbf{p},\textbf{1} \rangle-\langle \textbf{p},\Gamma(B) \rangle=\langle \textbf{p},\textbf{1} \rangle-\langle \textbf{p},\textbf{s}\rangle\\
&=(1+N)-((1-t)+N)=t.
\end{align*}
So we have
$$t\geq\epsilon\geq \frac{(2n+1)\epsilon-1}{2n(n+1)}.$$

\fbox{\textit{Case} 3:} $0<\slope(S)<b$. Denote $S'=S-(t+a,0)$. Then $S'$ is a compact 1-dimensional face of $\Gamma(B-aC)$ by Lemma \ref{lemma:translation}(2) when $b<+\infty$ and by Lemma \ref{lemma:t2} when $b=+\infty$. Denote the intersection point of the line through $S'$ with $x$-axis (resp. $y$-axis) by $(\alpha,0)$ (resp. $(0,\beta)$), where $\alpha$ and $\beta$ are positive real numbers. Then $\beta/\alpha< b$. As $\textbf{1}\in S=S'+(t+a,0)$, one has 
\begin{equation}\label{eq:lalala}
\beta\geq 1\quad \text{ and }\quad  t=1+\alpha/\beta-\alpha-a.
\end{equation}
Note that $\Gamma_+(B-aC)$ is contained in the convex hull of the union of $(\alpha,0)+\RR_{\geq 0}^2$ and $(0,\beta)+\RR_{\geq 0}^2$. So we have 
\begin{equation}\label{eq:lalala2}
\beta=\min\{\beta,b\alpha\}\leq \langle \textbf{b},\Gamma(B-aC)\rangle\leq 2,
\end{equation}
where $\textbf{b}=(b,1)$ and the last inequality follows from Lemma \ref{lemma:intersection} and  $(B-aC\cdot C)_P\leq 2$.

For any $\textbf{e}=(e_1,e_2)\in \NN^2$ with $e_1,e_2$ coprime, we have 
$$a(E_{\textbf{e}},X,B-aC)=e_1+e_2-\langle \textbf{e}, \Gamma(B-aC) \rangle\leq e_1+e_2-\min\{\alpha e_1,\beta e_2\},$$
where $E_{\textbf{e}}$ is the exceptional divisor obtained by the weighted blow up $\sigma:Y\rightarrow X$ at $P\in X$ with $\wt(x,y)=(e_1,e_2)$.
Since $\mult_{E_{\textbf{e}}} \sigma^* C=\min\{e_1,be_2\}$ and $\beta/\alpha<b$, one has
\begin{equation}\label{eq:long}
\epsilon\leq a(E_{\textbf{e}},X,B)
\leq \begin{cases}
e_1+e_2-\alpha e_1- ae_1, & \text{if $e_1/e_2 \leq \beta/\alpha$,}\\
e_1+e_2-\beta e_2- ae_1,  & \text{if $\beta/\alpha\leq e_1/e_2\leq b$,} \\
e_1+e_2-\beta e_2- abe_2, & \text{if $e_1/e_2\geq b$.}
\end{cases}
\end{equation}
Define a piece-wise linear function $f$ on $\RR_{\geq 0}^2$ by
$$f(r_1,r_2) :=
\begin{cases}
r_1+r_2-\alpha r_1- ar_1, & \text{if $r_1/r_2 \leq \beta/\alpha$,}\\
r_1+r_2-\beta r_2- ar_1,  & \text{if $r_1/r_2\geq \beta/\alpha$,} \\
\end{cases}
$$ 
for any $(r_1,r_2)\in \RR_{\geq 0}^2$. Here we use the convention that $r_1/0=+\infty\geq \beta/\alpha$. By \eqref{eq:long}  one has $f(e_1,e_2)\geq \epsilon$ for any $(e_1,e_2)\in \NN^2$ such that $e_1/e_2\leq b$. In particular
$f(b,1)\geq \epsilon$ if $b<+\infty$. 
\begin{claim*}
$f(e_1,e_2)\geq \epsilon$ for any $e_1\in \NN$ and $e_2\in \ZZ_{\geq 0}$.
\end{claim*}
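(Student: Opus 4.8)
The plan is to prove the Claim by reducing the case $e_2 = 0$ (already handled, since $f(e_1, 0) = (1-\alpha-a)e_1$, but actually we should double-check: $r_1/0 = +\infty \geq \beta/\alpha$, so $f(e_1,0) = e_1 + 0 - \beta\cdot 0 - ae_1 = (1-a)e_1 \geq 1 - a \geq \epsilon$) to routine verification, and then focus on the genuinely new content: extending the inequality $f(e_1, e_2) \geq \epsilon$ from the regime $e_1/e_2 \leq b$ (where it follows from \eqref{eq:long}) to the regime $e_1/e_2 > b$. So the whole point of the Claim is to handle weighted blow-ups with $\wt(x,y) = (e_1, e_2)$ where $e_1/e_2 > b$, i.e. those weights for which the corresponding exceptional divisor has a different intersection behaviour with $C$.

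First I would observe that $f$ is, on each of its two linear pieces, a linear function, and it is continuous across the ray $r_1/r_2 = \beta/\alpha$. Since $b > \beta/\alpha$ (we are in Case 3), for all $(r_1, r_2)$ with $r_1/r_2 \geq \beta/\alpha$ — in particular for $r_1/r_2 \geq b$ — we have $f(r_1, r_2) = r_1 + r_2 - \beta r_2 - a r_1 = (1-a)r_1 + (1-\beta)r_2$. Now $1 - a \geq \epsilon > 0$, and by \eqref{eq:lalala2} we have $\beta \leq 2$; but that alone does not make $1-\beta \geq 0$. The key is that $f$ restricted to the ray through $(b,1)$ already satisfies $f(b,1) \geq \epsilon$ (from \eqref{eq:long}), and on the region $r_1/r_2 \geq b$ the function is linear with positive $r_1$-coefficient $1-a$. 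So along any ray $r_1/r_2 = c$ with $c \geq b$, $f$ is an increasing function of the scaling parameter when $1-\beta \geq 0$, and when $1-\beta < 0$ we instead compare with the boundary: for fixed $e_2$, $f(e_1, e_2)$ is increasing in $e_1$ (coefficient $1-a > 0$), so the minimum over $e_1$ with $e_1/e_2 \geq b$ is attained at the smallest such $e_1$, namely roughly $e_1 = \lceil b e_2 \rceil$, which puts us essentially on (or just above) the ray $r_1/r_2 = b$ where we already know $f \geq \epsilon$. Making this precise: for $e_1 \geq b e_2$ one has $f(e_1, e_2) \geq f(\lceil be_2\rceil, e_2) \geq f(b e_2, e_2) = e_2 f(b, 1) \geq e_2 \epsilon \geq \epsilon$ when $e_2 \geq 1$, using that $f$ is increasing in the first coordinate on this piece and homogeneous of degree $1$.

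The one subtlety I would be careful about is the boundary case $b = +\infty$, where there is no ray $r_1/r_2 = b$ to compare against and the condition "$e_1/e_2 \leq b$" is vacuously all of $\NN^2$ with $e_2 \geq 1$; in that case \eqref{eq:long} directly gives $f(e_1, e_2) \geq \epsilon$ for all $e_2 \geq 1$, and the $e_2 = 0$ case is the routine computation above, so the Claim is immediate. I would also want to double check the monotonicity direction and that $f$ is genuinely increasing in $e_1$ on the second piece — the coefficient of $r_1$ there is $1 - a \geq \epsilon > 0$, so this holds — and that homogeneity $f(\lambda r_1, \lambda r_2) = \lambda f(r_1, r_2)$ is clear since each piece is linear (no constant term) and the defining inequality $r_1/r_2 \lessgtr \beta/\alpha$ is scale-invariant.

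The main obstacle, I expect, is not any single hard estimate but rather organizing the case analysis cleanly: one must carefully track which linear piece of $f$ governs each weight $(e_1, e_2)$, confirm continuity of $f$ along $r_1/r_2 = \beta/\alpha$, and use the already-established inequality $f(e_1,e_2) \geq \epsilon$ for $e_1/e_2 \leq b$ together with monotonicity in $e_1$ to push past $e_1/e_2 = b$. Once the Claim is in hand, the intended downstream use is presumably to take $\textbf{e} = (1, 0)$ or to optimize $f$ over all lattice directions to extract the bound $t = 1 + \alpha/\beta - \alpha - a \geq (\epsilon - 1/n)/(n-1)$, completing Case 3.
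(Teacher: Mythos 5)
Your proof is correct and follows essentially the same route as the paper: dispose of $e_2 = 0$ and $b = +\infty$ directly, then for $e_2 \geq 1$ and $e_1/e_2 > b$ use that $f$ is linear and homogeneous on the piece $r_1/r_2 \geq \beta/\alpha$ to reduce, via monotonicity, to the already-established inequality $f(b,1) \geq \epsilon$. The one cosmetic difference is that you decrease the first coordinate $e_1$ down to $b e_2$ (using the coefficient $1 - a > 0$), whereas the paper increases the second coordinate $e_2$ up to $e_1/b$ (using $1 - \beta \leq 0$), which forces the paper into a small extra case split $e_1 < b$ versus $e_1 \geq b$ that your ordering of the reduction neatly avoids.
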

\begin{proof}
If $e_2=0$, then $f(e_1,e_2)=(1-a)e_1\geq \epsilon e_1\geq \epsilon$. So we may suppose that $e_2>0$.
Since the claim has been confirmed when $e_1/e_2\leq b$, we may suppose that $b<+\infty$ and $e_1>be_2$. Then $f(e_1,e_2)=(1-a)e_1+(1-\beta)e_2$ since $b> \beta/\alpha$.

If $e_1<b$, then $e_2<e_1/b<1$, which implies that $e_2\leq 0$ and contradicts our hypothesis.
So we may assume that $e_1\geq b$. Then 
$$f(e_1,e_2)\geq f(e_1,e_1/b)=f(b,1)\cdot e_1/b\geq \epsilon,$$
where the first inequality follows from $1-\beta\leq 0$ (see \eqref{eq:lalala}) and $e_2<e_1/b$.
\end{proof}

By \eqref{eq:lalala} we have $t=f(1,\alpha/\beta)$. By Lemma \ref{lemma:mod} below, there exist $k\in \NN$ and $l\in \mathbb Z_{\geq 0}$ such that 
$$\frac{\epsilon-|k\alpha/\beta-l| }{k}\geq \frac{(2n+1)\epsilon-1}{2n(n+1)}.$$

We claim that
$$f(k,k\alpha/\beta)\geq f(k,l)-|k\alpha/\beta-l|.$$
Indeed, if $l\geq k\alpha/\beta$, then 
$$f(k,l)=k+l-\alpha k-ak=f(k,k\alpha/\beta)+(l-k\alpha/\beta);$$
Otherwise $l\leq k\alpha/\beta$, then 
\begin{align*}
f(k,l)=k+l-\beta l -ak=& f(k,k\alpha/\beta)+(\beta-1)(k\alpha/\beta-l)\\
\leq & f(k,k\alpha/\beta)+(k\alpha/\beta-l),
\end{align*}
where the inequality follows from $1\leq \beta \leq 2$ (see \eqref{eq:lalala} and \eqref{eq:lalala2}).

Therefore we have 
$$t=f(1,\alpha/\beta)=\frac{f(k,k\alpha/\beta)}{k}\geq \frac{f(k,l)-|k\alpha/\beta-l| }{k}\geq \frac{\epsilon-|k\alpha/\beta-l| }{k}\geq \frac{(2n+1)\epsilon-1}{2n(n+1)}.$$

\fbox{\textit{Case} 4:} $\slope(S)>b$. Then $0<b<\slope(S)<+\infty$. Denote $S'=S-(0,b(t+a))$. By Lemma \ref{lemma:translation}(1) $S'$ is a compact 1-dimensional face of $\Gamma(B-aC)$. Denote the intersection point of the line through $S'$ with $x$-axis (resp. $y$-axis) by $(\alpha,0)$ (resp. $(0,\beta)$), where $\alpha$ and $\beta$ are positive real numbers. Then $\beta> b\alpha$ and  $\Gamma_+(B-aC)$ is contained in the convex hull of the union of $(\alpha,0)+\RR_{\geq 0}^2$ and $(0,\beta)+\RR_{\geq 0}^2$. Recall that $\textbf{1}\in S$ and $\textbf{1}$ in not the right vertex of $S$, the $x$-coordinate of the right vertex of $S'$ is large than 1, which implies that $\alpha>1$. 
Since $(B-aC\cdot C)_P\leq 2$, by Lemma \ref{lemma:intersection} one has 
$$b\alpha=\min\{\beta,b\alpha\}\leq \langle\textbf{b},\Gamma(B-aC)\rangle\leq 2,$$
where $\textbf{b}=(b,1)$. Therefore $b=1$.
After switching $x$ and $y$, we have $0<\slope(S)<1=b$ and hence we can reduce this to Case 3 (note that the condition that $\textbf{1}$ in not the right vertex of $S$ may not be satisfied after switching $x$ and $y$; fortunately this condition is not used in the proof for Case 3). This finishes the proof of Theorem \ref{thm:surface}.
\end{proof}

\begin{lemma}\label{lemma:mod}
Let $x,\epsilon$ be two real numbers so that $x\geq 0$ and $\frac{1}{n+1}\leq \epsilon\leq \frac{1}{n}$ for some $n\in \NN$. Then there exist $k\in \NN$ and $l\in \mathbb Z_{\geq 0}$ such that 
$$\frac{\epsilon-|kx-l| }{k}\geq \frac{(2n+1)\epsilon-1}{2n(n+1)}.$$
\end{lemma}
\begin{proof}
Without loss of generality, we may suppose that $0\leq x<1$. If $x=0$, we can take $k=1$ and $l=0$, then we are done. So we may suppose that $0<x<1$. 

We construct two finite sequences as follows. Set $r_{-1}=1$, $r_0=x$, $a_{-1}=0$ and $a_{0}=1$. For $i\geq 1$, assuming that $r_{i-2}, r_{i-1}, a_{i-2},a_{i-1}$ have been defined, we define $r_i$  by $r_{i-2}=b_{i}r_{i-1}+r_{i}$ where $b_i$ is  a positive integer and $0\leq r_{i}<r_{i-1}$
and we define $a_i=a_{i-2}+b_i a_{i-1}$. If either $r_i=0$ or $a_i\geq n+1$, we terminate the sequence and set $m$ to be $i$. Note that the sequence will stop because $a_i> a_{i-1}$ for $i\geq 2$. Moreover, by definition we have $m\geq 1$ and $a_i$ is a postive integer for $0\leq i\leq m$.
\begin{claim*} For $i=-1,0,\cdots,m$, we have 
$$a_i x \equiv
\begin{cases}
r_i ~(\modo 1),  & \text{if $i$ is even,} \\
-r_i~(\modo 1), & \text{if $i$ is odd.}
\end{cases}
$$
\end{claim*}
\begin{proof}[Proof of the claim]
By definition we have $r_{-1}=1-a_{-1} x$ and $r_0=a_0 x$. Suppose that the claim holds for $i-2$ and $i-1$. 
If $i$ is even, then
\begin{align*}
r_i &=r_{i-2}-b_i r_{i-1}\\
&\equiv a_{i-2}x-b_i(-a_{i-1}x) ~(\modo 1)\\
& =(a_{i-2}+b_ia_{i-1})x=a_i x.
\end{align*}
If $i$ is odd, then
\begin{align*}
r_i&=r_{i-2}-b_i r_{i-1}\\
&\equiv -a_{i-2}x-b_i a_{i-1}x ~(\modo 1)\\
&=-(a_{i-2}+b_ia_{i-1})x=-a_i x.
\end{align*}
\end{proof}
\begin{claim*} $a_ir_{i-1}+a_{i-1}r_i=1$ for $i=0,\cdots,m$.
\end{claim*}
\begin{proof}[Proof of the claim]
By definition we have $a_{0}r_{-1}+a_{-1}r_0=1$. Suppose that the claim holds for $i-1$. Then
\begin{align*}
&a_ir_{i-1}+a_{i-1}r_i\\
=&(a_{i-2}+b_ia_{i-1})r_{i-1}+a_{i-1}(r_{i-2}-b_ir_{i-1})\\
=&a_{i-2}r_{i-1}+a_{i-1}r_{i-2}=1.
\end{align*}
\end{proof}

If $a_m\leq n$, by definition we have $r_m=0$, so $a_mx$ is an integer. Taking $k=a_m$ and $l=a_mx$, we have 
$$\frac{\epsilon-|kx-l| }{k}=\frac{\epsilon-0}{a_m}\geq  \frac{\epsilon}{n}>\frac{(2n+1)\epsilon-1}{2n(n+1)}.$$  

If $a_m\geq n+1$, by definition we have $a_{m-1}\leq n$. We claim that
\begin{align}\label{eqI}
  \left(\frac{1}{a_{m-1}}+\frac{1}{a_m}\right)\epsilon-\frac{1}{a_{m-1}a_{m}}\geq \left(\frac{1}{n}+\frac{1}{n+1}\right)\epsilon-\frac{1}{n(n+1)}.
\end{align}
Indeed, the left side minus the right side of \eqref{eqI} is equal to
\begin{align*}
&\left(\frac{1}{a_{m-1}}-\frac{1}{n}\right)\epsilon-\frac{1}{a_ma_{m-1}}-\left(\frac{1}{n+1}-\frac{1}{a_m}\right)\epsilon+\frac{1}{n(n+1)}\\
=& \left(\frac{1}{a_{m-1}}-\frac{1}{n}\right)\epsilon-\frac{1}{a_ma_{m-1}}+\frac{1}{na_m}-\left(\frac{1}{n+1}-\frac{1}{a_m}\right)\epsilon+\frac{1}{n(n+1)}-\frac{1}{na_m}\\
=& \left(\frac{1}{a_{m-1}}-\frac{1}{n}\right)\left(\epsilon-\frac{1}{a_m}\right)+\left(\frac{1}{n+1}-\frac{1}{a_m}\right)\left(\frac{1}{n}-\epsilon\right)\geq 0.
\end{align*}
By \eqref{eqI} and the fact that $a_mr_{m-1}+a_{m-1}r_m=1$, we have
$$ \left(\frac{1}{a_{m-1}}+\frac{1}{a_m}\right)\epsilon-\frac{a_mr_{m-1}+a_{m-1}r_m}{a_{m-1}a_{m}}\geq \left(\frac{1}{n}+\frac{1}{n+1}\right)\epsilon-\frac{1}{n(n+1)},$$
which is equivalent to
$$\frac{\epsilon-r_{m-1}}{a_{m-1}}+\frac{\epsilon-r_{m}}{a_{m}}\geq \frac{(2n+1)\epsilon-1}{n(n+1)}. $$
So we have
$$\text{either }\quad  \frac{\epsilon-r_{m-1}}{a_{m-1}}\geq \frac{(2n+1)\epsilon-1}{2n(n+1)}\quad  \text{ or } \quad \frac{\epsilon-r_{m}}{a_{m}}\geq \frac{(2n+1)\epsilon-1}{2n(n+1)}.$$
In the former case we take $k=a_{m-1}$ and in the latter case we take $k=a_m$. This completes the proof of Lemma \ref{lemma:mod}.
\end{proof}

\section{Proofs of main theorems}
Making a small modification to the proof of \cite[Theorem 1.10]{HJL22} and applying Theorem \ref{thm:surface}, we obtain the proof of the following proposition.
\begin{proposition}\label{prop:codim1}
Let $\pi:(X,B)\rightarrow Z$ be an lc-trivial fibration and $z\in Z$ a codimension $1$ point such that

(1) $\dim X - \dim Z=1$,

(2) $\mld(X/Z\ni z,B)\geq \epsilon$ where $0<\epsilon\leq 1$, and

(3) the generic fiber of $\pi$ is a rational curve.\\
Then $\lct(X/Z\ni z,B; \pi^*\overline{z})\geq \delta$, where $\delta$ is as in Theorem \ref{thm:main}.
In particular, if $B$ is effective, then the multiplicity of each component of $\pi^*z$ is bounded from above by $1/\delta$.
\end{proposition}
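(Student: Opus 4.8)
\textbf{Proof proposal for Proposition \ref{prop:codim1}.}

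The plan is to reduce the statement to the surface situation handled by Theorem \ref{thm:surface}, following the strategy of \cite[Theorem 1.10]{HJL22} but keeping track of the extra parameter $\epsilon$. First I would localize: since $z\in Z$ has codimension $1$, after shrinking $Z$ around $z$ I may assume $\overline z$ is a smooth prime divisor on a smooth base, or more precisely pass to a suitable birational model of $Z$ so that near the generic point of $\overline z$ the base is a smooth curve germ (the discriminant and the log canonical threshold $\lct(X/Z\ni z,B;\pi^*\overline z)$ are computed at the generic point of $\overline z$, which is a codimension $1$ point, so only a neighborhood of that point matters). Next, since the generic fiber of $\pi$ is a rational curve and we only care about the behaviour over a neighborhood of a codimension $1$ point, I would perform a base change and birational modification to arrange that $\pi:X\to Z$ is (generically over $z$) a $\PP^1$-bundle over a smooth curve; the canonical bundle formula is compatible with such modifications (as recalled in Definition \ref{def:KKA}, the discriminant b-divisor $\mathbf B$ transforms by pullback, and taking the log canonical threshold only increases or stays the same under the relevant crepant pullbacks, so it suffices to bound the threshold upstairs). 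Here one uses that $K_X+B\sim_{\RR}0/Z$ together with $\mld(X/Z\ni z,B)\ge\epsilon$ is preserved by crepant pullback.

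Once we are on a $\PP^1$-bundle $X\to Z$ over a smooth curve germ $Z\ni z$, with $K_X+B\sim_{\RR}0/Z$ and $B$ effective over the generic point of $Z$, the threshold $\lct(X/Z\ni z,B;\pi^*\overline z)$ is literally $\lct$ of the fiber $F_0=\pi^*\overline z$ (a sum of smooth vertical curves, possibly with multiplicities, in the smooth surface $X$) against the pair $(X,B)$. I would then isolate a single component $C$ of the reduced fiber through the relevant point $P$, write $B=aC+B'$ with $a=\mult_C B$ and $C\not\subseteq\Supp B'$, and check the two hypotheses of Theorem \ref{thm:surface}: that $a\le 1-\epsilon$, and that $(B'\cdot C)_P\le 2$. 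The bound $a\le 1-\epsilon$ is immediate from $\mld\ge\epsilon$ (the divisor $C$ itself, being a component of $B$, has $a(C,X,B)=1-a\ge\epsilon$; if $C\not\subseteq\Supp B$ then $a=0$ and there is nothing to check). The intersection bound $(B'\cdot C)_P\le 2$ should come from the adjunction/$\PP^1$-fibration structure: restricting $K_X+B\sim_{\RR}0/Z$ to the $\PP^1$ fiber $C$ gives $\deg_{C}(K_X+B)|_C=0$, i.e.\ $(B'\cdot C)=2-a\cdot(C\cdot C)-\deg K_C|_{C}$-type bookkeeping, and since $\deg_C(-K_X)|_C=2$ for a $\PP^1$-bundle fiber and the vertical components contribute non-negatively, the horizontal part of $B$ meets $C$ in total degree at most $2$; localizing at $P$ gives $(B'\cdot C)_P\le 2$. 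With both hypotheses verified, Theorem \ref{thm:surface} gives $\lct(X\ni P,B;C)\ge(\epsilon-1/n)/(n-1)$ for every $n\ge2$, hence the same bound for $\lct(X/Z\ni z,B;\pi^*\overline z)$ (the threshold against the full fiber $\pi^*\overline z$ is at least the threshold against one reduced component, up to the multiplicity which only helps), and taking the supremum over $n$ yields $\delta$.

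For the last sentence: if $B$ is effective, then $B_Z=\sum_D(1-b_D)D$ is effective and the coefficient of $\overline z$ in $B_Z$ is $1-b_{\overline z}$ where $b_{\overline z}=\lct(X/Z\ni z,B;\pi^*\overline z)\ge\delta$; writing $\pi^*\overline z=\sum m_i C_i$ with the $m_i$ the multiplicities of the components, one has $b_{\overline z}\le 1/\max_i m_i$ since $(X,B+b_{\overline z}\pi^*\overline z)$ lc forces each $b_{\overline z}m_i+\mult_{C_i}B\le 1$, in particular $b_{\overline z}m_i\le 1$; combined with $b_{\overline z}\ge\delta$ this gives $m_i\le 1/\delta$.

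The main obstacle I expect is the reduction step — carefully arranging, via base change and birational modifications of both $X$ and $Z$, that the situation becomes a $\PP^1$-bundle over a smooth curve \emph{while controlling what happens to} $\mld(X/Z\ni z,B)$, the effectivity of $B$ over the generic point, and the discriminant coefficient $1-b_{\overline z}$, so that a bound obtained upstairs transfers to the original $\lct(X/Z\ni z,B;\pi^*\overline z)$ downstairs. This is exactly the technical content borrowed from \cite[Theorem 1.10]{HJL22}, and the phrase ``making a small modification to the proof'' suggests the only genuinely new input is feeding the refined surface estimate (Theorem \ref{thm:surface}, with its $n$-dependent bound) into an otherwise established reduction; verifying the $(B'\cdot C)_P\le2$ hypothesis in the generality needed is the one place where care with the $\epsilon$-lc hypothesis and the fibration geometry is required.
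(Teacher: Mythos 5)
Your high-level route is the same as the paper's: reduce to a $\PP^1$-bundle over a smooth curve germ and feed the two hypotheses $\mult_C B\le 1-\epsilon$ and $(B'\cdot C)_P\le 2$ into Theorem~\ref{thm:surface}. Your verification of both hypotheses, and your derivation of the multiplicity bound from $\lct\ge\delta$, matches the paper. But there are two gaps.

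The substantive gap is the reduction to ``$X$ smooth and $B\ge 0$.'' You acknowledge it as the main obstacle but describe it only as ``base change and birational modifications,'' and you misidentify what needs controlling: you worry about effectivity of $B$ over the \emph{generic point} (which is a hypothesis and stable under crepant pullback), when the real problem is that after a log resolution $f:W\to X$ the crepant boundary $B_W$ acquires negative coefficients, so you no longer have a \emph{pair}. The paper's argument is concrete: write $B_W=D-G$ with $D,G\ge 0$ sharing no component; establish that $G$ is very exceptional over $Z$; run a $(K_W+D)$-MMP$/Z$ to contract $G$ and land on $(Y,D_Y)$ with $D_Y\ge 0$, observing that this MMP is also a $K_W$-MMP so $Y$ stays smooth, and by negativity the result is crepant to the original. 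The very-exceptionality of $G$ hinges on a claim that is genuinely new relative to \cite{HJL22} and uses $\epsilon<1$: there exists a prime divisor $C_0\subseteq\Supp f^*\pi^*\overline z$ with $\mult_{C_0}B_W>0$ (otherwise the snc structure would force $\mld(W/Z\ni z,B_W)\ge 1$). The paper then disposes of $\epsilon\ge 1$ separately by citing \cite[Theorem~1.10]{HJL22}. None of this is visible in your outline, so ``follow \cite{HJL22}'' is not an adequate account of the reduction.

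A smaller but real error: ``the threshold against the full fiber $\pi^*\overline z$ is at least the threshold against one reduced component, up to the multiplicity which only helps'' has the inequality backwards. If $\pi^*\overline z=\sum m_iC_i$ then $\lct(X,B;\pi^*\overline z)\le \lct(X,B;C_i)/m_i$, so multiplicities make the threshold \emph{smaller}, not larger. This is harmless here only because after the $K_X$-MMP the model is an actual $\PP^1$-bundle, so $\pi^*\overline z$ is a single reduced $\PP^1$; your phrasing suggests you hadn't registered that fact, and if the fiber really did have multiplicities your deduction from a single component to $\pi^*\overline z$ would fail.
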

\begin{proof}
If  $\mld(X/Z\ni z,B)> 1$,  by \cite[Theorem 1.10]{HJL22}, $$\lct(X/Z\ni z,B; \pi^*\overline{z})\geq \mld(X/Z\ni z,B)-\frac{1}{2}> \frac{1}{2}.$$ 
It is easy to check that $\delta\leq \frac{1}{2}$ where $\delta$ is as in Theorem \ref{thm:main} when $0<\epsilon\leq 1$. So we may assume that $\mld(X/Z\ni z,B)\leq 1$.

By the same argument as that in the proof of \cite[Theorem 1.10]{HJL22}, we can reduce the problem to the case when $\dim X=2$. So from now on we suppose that $X$ is a surface and $Z$ is a curve.

First we consider the case when $X$ is smooth and $B\geq 0$. Since the generic fiber of $\pi$ is a rational curve, by running an MMP$/Z$ for $K_X$ we can assume that $\pi: X\rightarrow Z$ is a $\PP^1$-bundle over a curve. Then $F:=\pi^*(z)\cong \PP^1$. Applying the adjunction formula we obtain $K_X\cdot F=-2$. As $(K_X+B)\cdot F=0$ and $F\cdot F=0$, one has $(B'\cdot F)_P\leq 2$ for any close point $P\in F$ where $B'=B-\mult_F B\cdot F$. 
By Theorem \ref{thm:surface}, $\lct(X\ni P, B;F)\geq \delta$ for any closed point $P\in F$, where $\delta$ is as in Theorem \ref{thm:main}. It follows that $\lct(X/Z\ni z,B; \pi^*z)\geq \delta$.

Next we treat the general case. Let $f:W\rightarrow X$ be a log resolution of $(X,B+\pi^*z)$. We may write $K_W+B_W=f^*(K_X+B)$. Then $\mld(W/Z\ni z,B_W)=\mld(X/Z\ni z,B)$. So we have 
\begin{align}\label{eqnew}
\epsilon\leq \mld(W/Z\ni z,B_W) \leq 1,
\end{align} which implies that $(W/Z\ni z,B_W)$ is lc. Hence $B_W\leq 1$ (i.e. coefficients of $B_W$ are at most 1) after possibly shrinking $Z$ near $z$. We claim that 
\begin{align}\label{text}
\text{there is a prime divisor $C_0\subseteq \Supp f^*\pi^*z$ such that $\mult_{C_0} B_W\geq 0$.}
\end{align}
Indeed, if this is not the case, since $B_W\leq 1$ and $B_W$ is a simple normal crossing divisor, 
one has $\mld(W/Z\ni z,B_W)> 1$, which is in contradiction with \eqref{eqnew}.  

We may write $B_W=D-G$ where $D,G\geq 0$ have no common components. Then 
$$K_W+D=K_W+B_W+G\sim_{\RR} G/Z.$$
We claim that any $(K_W+D)$-MMP/$Z$ is also a $K_W$-MMP/$Z$. Indeed, if $E$ is a curve on $W$ such that its image on $Z$ is a point and $E\cdot (K_W+D)<0$, then $E\cdot G<0$ and hence $E\subseteq \Supp G$. So $E\nsubseteq \Supp D$ and $E\cdot D\geq 0$. It follows that $E\cdot K_W <0$.

By Remark \ref{remark:effective}, $B$ is effective over the generic point of $Z$. So we may assume that $\Supp G\subseteq \Supp f^*\pi^* z$ after possibly shrinking $Z$ near $z$. By \eqref{text}, there exists a prime divisor $C_0 \subseteq \Supp f^*\pi^* z$ but $C_0 \nsubseteq \Supp G$. Hence $G$ is very exceptional over $Z$ (see \cite[Definition 3.1]{Bir12}). By \cite[Theorem 3.5]{Bir12}, we may run a $(K_W+D)$-MMP/$Z$ and reach a model $Y$ such that $G_Y=0$ with the induced maps $g:W\rightarrow Y$ and $h:Y\rightarrow Z$. As this MMP is also a $K_W$-MMP/Z, $Y$ is a smooth surface. Since $K_W+B_W=K_W+D-G\sim_{\RR} 0/Z$, by the negativity lemma, one has $g^*(K_Y+D_Y)=K_W+B_W$. Hence 
$$\mld(Y/Z\ni z,D_Y)=\mld(X/Z\ni z,B)=\epsilon$$  and  $$\lct(Y/Z\ni z,D_Y;h^*z)=\lct(X/Z\ni z,B;\pi^*z).$$ Replacing $(X,B)$ by $(Y,D_Y)$, we can reduce the general case to the case when $X$ is smooth and $B$ is effective, which has been solved.
\end{proof}

\begin{proof}[Proof of Theorem \ref{thm:main}]
By the same argument as that in the proof of \cite[Theorem 1.7]{HJL22},  Theorem \ref{thm:main} follows from Proposition \ref{prop:codim1}.
\end{proof}

\begin{proof}[Proof of Corollary \ref{newcor}]
By the same argument as that in the proof of \cite[Corollary 1.8]{HJL22},  Corollary \ref{newcor} follows from Proposition \ref{prop:codim1}.
\end{proof}

\begin{proof}[Proof of Corollary \ref{thm:main2}]
The case when $\epsilon=1$ is solved in \cite[Theorem 1.4]{HJL22}. So we may suppose that $\epsilon<1$. By replacing $X$ by its anti-canonical model over $Z$, we may suppose that $-K_X$ is ample over $Z$. Take a large integer $N$ such that $1/N\leq 1-\epsilon$ and $-NK_X$ is very ample over $Z$. Let $H$ be a general effective divisor such that $H\sim -NK_X/Z$ and let $B=\frac{1}{N} H$. Then $K_X+B\sim_{\QQ} 0/Z$ and $(X,B)$ is an $\epsilon$-lc pair. By Corollary \ref{newcor}, we can choose $M_Z\geq 0$ representing the moduli part so that $(Z,B_Z+M_Z)$ is $\delta$-lc, where $\delta$ is as in Theorem \ref{thm:main}. 
Since  $B\geq 0$, one has $B_Z\geq 0$. Hence $Z$ is $\delta$-lc. The last statement of Corollary \ref{thm:main2} follows from the last statement of Proposition \ref{prop:codim1}.
\end{proof}
\appendix
\section{Proof of the lemma in Example \ref{exa}}
In this appendix we will prove the lemma used in Example \ref{exa}.
\begin{lemma}\label{mld}
Let $X=\mathbb A^2$ with coordinates $x,y$. Let $B$ be the $\mathbb R$-divisor 
$$\lambda\cdot (x^m+y^n=0)+\mu\cdot (x=0)$$ 
where $n,m\in \mathbb N$ and $0\leq \lambda, \mu \leq 1$, such that $(X,B)$ is lc. 
Then
$$\mld(X\ni 0, B)=\inf\big\{p_1+p_2-\min\{(\lambda m+\mu)p_1,\mu p_1 +\lambda n p_2\} \mid (p_1,p_2)\in \mathbb N^2 \big\}.$$
\end{lemma}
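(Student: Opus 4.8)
The plan is to compute the minimal log discrepancy of the pair $(X\ni 0, B)$ with $X=\mathbb A^2$ and $B=\lambda\cdot(x^m+y^n=0)$ by reducing it to weighted blow-ups, which is exactly the kind of combinatorial description that the Newton diagram machinery of Section 3 provides. The first observation is that $\Gamma(B)$ has precisely the two vertices $(m,0)$ and $(0,n)$ (since $f=x^m+y^n$ is already in Newton-nondegenerate form up to a unit), so $\Gamma_+(B)$ is the convex hull of $(m,0)+\mathbb R^2_{\geq 0}$ and $(0,n)+\mathbb R^2_{\geq 0}$, rescaled by $\lambda$. Hence for any coprime $\textbf{w}=(w_1,w_2)\in\mathbb N^2$, formula \eqref{eqNewton:surface} gives
\[
a(E_{\textbf{w}},X,B)=w_1+w_2-\langle \textbf{w},\Gamma(B)\rangle=w_1+w_2-\min\{\lambda m w_1,\lambda n w_2\}.
\]
This already shows that the right-hand side of the claimed formula is an infimum over a (sub)set of divisorial log discrepancies, so it is $\geq \mld(X\ni 0,B)$; the substance is the reverse inequality, namely that weighted blow-ups in the coordinates $(x,y)$ compute the mld.

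Next I would argue that it suffices to consider exceptional divisors $E$ whose center is the origin, because $(X,B)$ is lc and $X$ is smooth, so any divisor with center a curve (an affine line component) contributes log discrepancy at least $1-\lambda\geq 0$ and in fact is not the one achieving the infimum unless $\mld=0$ is already witnessed on the right-hand side (which happens exactly when the pair is strictly lc). For divisors centered at $0$, I would invoke \cite[Theorem 1]{Kaw17}: there exist local coordinates $(u,v)$ and coprime positive integers $(a_1,a_2)$ such that the weighted blow-up with $\mathrm{wt}(u,v)=(a_1,a_2)$ computes $\mld(X\ni 0,B)$. The key point is then to transfer this back to the original coordinates $(x,y)$. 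Since $B=\lambda\cdot(x^m+y^n=0)$ is quasi-homogeneous, the monomial curve structure is rigid; I expect that any coordinate change respecting (or needed to compute) the mld can be taken to be compatible with the weight structure, so that the optimal weighted blow-up is, after all, one of the form $\mathrm{wt}(x,y)=(w_1,w_2)$ with $(w_1,w_2)\in\mathbb N^2$ coprime. Clearing the coprimality restriction is harmless: scaling $\textbf{w}$ scales $a(E_{\textbf{w}},X,B)$ linearly, so taking the infimum over all $(p_1,p_2)\in\mathbb N^2$ (not just coprime pairs) gives the same value as the infimum over all coprime pairs — except that the unrestricted infimum might in principle be smaller, which is fine for an infimum identity provided we also show every $(p_1,p_2)\in\mathbb N^2$ does give a genuine divisorial log discrepancy (again via a not-necessarily-primitive weighted blow-up, whose exceptional divisor decomposes but whose discrepancies are still governed by the same linear formula).

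The main obstacle I anticipate is precisely the rigidity/compatibility step: showing that the coordinates produced by \cite[Theorem 1]{Kaw17} can be replaced by coordinates in which the computing valuation is monomial \emph{with respect to the given $x,y$}. For a general pair this is false (the excerpt's own example $B=(x^2+2xy+y^2=0)$ shows Newton polytopes are coordinate-dependent), but here $f=x^m+y^n$ is Newton-nondegenerate along every compact face, so the pair is "toric-like" at the origin and the set of divisors with center $0$ relevant to the mld should be exhausted by toric (weighted) blow-ups in the $x,y$-weights. I would make this precise either by a direct Newton-diagram argument — any weighted blow-up in arbitrary coordinates has $a(E,X,B)\geq \langle\textbf{w},\mathbf 1\rangle-\langle\textbf{w},\Gamma(B)\rangle$ with $\Gamma(B)$ taken in the \emph{worst-case} coordinates, and Newton-nondegeneracy forces equality to be attained in the standard coordinates — or by citing monomial-valuation results for nondegenerate hypersurface singularities. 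Once that is in place, the infimum over $\textbf{w}\in\mathbb N^2$ of $w_1+w_2-\min\{\lambda m w_1,\lambda n w_2\}$ equals the mld, which is the assertion. A final bookkeeping remark: when $\lambda m$ and $\lambda n$ are such that the infimum on the right is $0$ (the strictly lc case), the statement still holds since then $\mld(X\ni 0,B)=0$ as well; and the infimum is actually attained (it is a min over a finitely-generated cone's lattice points once one normalizes), so no subtlety about non-attainment arises.
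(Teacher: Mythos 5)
Your reduction to weighted (toric) blow-ups and the computation $a(E_{\textbf{w}},X,B)=w_1+w_2-\min\{\lambda m w_1,\lambda n w_2\}$ for $\textbf{w}\in\NN^2$ is correct and matches the paper's opening move; it establishes that the displayed infimum dominates $\mld(X\ni 0,B)$. The problem is the reverse inequality, and there your argument has a genuine gap which you yourself flag as the ``main obstacle.'' Citing \cite[Theorem 1]{Kaw17} produces local coordinates $(u,v)$ and a weighted blow-up in \emph{those} coordinates computing the mld, but nothing forces $(u,v)$ to be related to $(x,y)$ by a weight-compatible automorphism, and the appeals to rigidity of a quasi-homogeneous germ and to Newton-nondegeneracy are left as heuristics. (Note that the Newton polytope of $x^m+y^n$ \emph{can} change under a coordinate change, e.g.\ $x^2+y^2=uv$ for $u=x+iy$, $v=x-iy$, so these heuristics are not self-evident.) What you actually need is a theorem asserting that for such a plane curve germ the mld is computed by a monomial valuation in the \emph{given} coordinates; you neither supply nor cite one, and that is precisely the mathematical content the lemma requires.

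The paper closes this gap by a different and more self-contained route, not via Kawakita at all: let $g=\gcd(m,n)$, $(n',m')=(n,m)/g$, and perform the single weighted blow-up $\sigma:Y\to X$ with $\wt(x,y)=(n',m')$, the weight dual to the unique compact face of $\Gamma(B)$. Writing $K_Y+eE+B_Y=\sigma^*(K_X+B)$, one checks that the strict transform $B_Y$ meets $E$ transversally at smooth points of $Y$ away from the two torus-fixed points $P_1,P_2$. At a transversal intersection point the local mld is $2-e-\lambda\geq 1-e=a(E,X,B)$, so these points never undercut $a(E,X,B)$; at $P_1,P_2$ the pair $(Y,eE)$ is purely toric, so its mld is computed by a toric divisor over $Y$ and hence over $X$. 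Therefore $\mld(X\ni 0,B)=\min\{a(E,X,B),\ \mld(Y\ni P_1,eE),\ \mld(Y\ni P_2,eE)\}$ is realized by a toric valuation, which is exactly the reverse inequality. This explicit one-step toric partial resolution is what your Newton-nondegeneracy intuition would need to become; as written, that step is absent.
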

\begin{proof}
Denote $B_1=\lambda\cdot (x^m+y^n=0)$ and $B_2=\mu\cdot (x=0)$. Then $B=B_1+B_2$. 
For any $\textbf{p}=(p_1,p_2)\in \mathbb N^2$ with $p_1,p_2$ coprime, denote by $E_{\textbf{p}}$ the toric divisor over $X$ corresponding to the ray $\mathbb R_{\geq 0}\cdot \textbf{p}$. Then
$$a(E_{\textbf{p}},X,B)=p_1+p_2-\min\{(\lambda m+\mu)p_1,\mu p_1 +\lambda n p_2\}.$$ So it suffices to show that there exists a toric divisor $F$ over $X$ with  $\text{center}_{X} F=0$ such that $a(F,X,B)=\mld(X\ni 0, B)$. 

Let $g=\gcd(n,m)$ and $(n',m')=(n,m)/g$. Let $\sigma: Y\rightarrow X$ be the weighted blow up at $0$ with $\wt(x,y)=(n',m')$. Then $Y\subset X\times \mathbb P^1_{z,w}$ is defined by $(x^{m'}w=y^{n'}z)$ and the exceptional divisor $E$ on $Y$ is isomorphic to $\mathbb P^1_{z,w}$. Denote $P_1=[0:1]$ and $P_2=[1:0]$ on $E$. Then $Y\setminus \{P_1,P_2\}$ is smooth. Denote by $B'_1,B'_2$ the birational transform of $B_1,B_2$ on $Y$. Then we can write
$$K_Y+e  E + B'_1+B'_2=\sigma^*(K_X+B).$$
Moreover, $(\Supp B'_1)|_E$ is defined by $(z^g+w^g=0)$ on $E$. Hence $P_1,P_2\notin E \cap \Supp B'_1$ and $E$ meets $\Supp B'_1$ transversally. It follows that
$$\mld(X\ni 0, B)=\min\{a(E,X,B),~ \mld(Y\ni P_1,e E+B'_2),~\mld(Y\ni P_2,e E+B'_2)\}.$$
Since $\sigma:Y\rightarrow X$ is a toric morphism and $E,B'_2$ are toric divisors on $Y$, there exists a toric divisor $F_i$ over $Y$ with $\text{center}_Y F_i=P_i$ such that
$$\mld(Y\ni P_i,e E+B'_2)=a(F_i,Y,e E+B'_2)=a(F_i,X,B)$$
for $i=1,2$. This finishes the proof of the lemma.
\end{proof}
\begin{lemma}\label{lct}
Let $X=\mathbb A^2$ with coordinates $x,y$. Let $C$ be the curve $(x=0)$ and $B$ the $\mathbb R$-divisor $\lambda\cdot (x^m+y^n=0)$, where $0\leq \lambda\leq 1$  and $n,m\in \mathbb N$ such that $\lambda n\geq 1$ and $\frac{1}{n}+\frac{1}{m}\geq \lambda$.
Then
$$\lct(X\ni 0,B; C)= 1-\lambda m +\frac{m}{n}.$$
\end{lemma}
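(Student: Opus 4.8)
The plan is to compute $\lct(X\ni 0, B; C)$ directly via Newton diagrams, using the toric/weighted blow-up machinery already developed in Section 3. Here $B + tC = \lambda\cdot(x^m + y^n = 0) + t\cdot(y=0)$, and since both $B$ and $C$ are (up to coefficient) defined by ``nice'' binomials, the Newton polytope of $B + tC$ is explicit: $\Gamma_+(\lambda(x^m+y^n))$ is the convex hull of $(\lambda m, 0) + \RR_{\geq 0}^2$ and $(0,\lambda n)+\RR_{\geq 0}^2$, while $\Gamma_+(tC)$ is $(0,t)+\RR_{\geq 0}^2$. Adding these Minkowski-style (as in Lemma \ref{lemma:t2}, since $\Gamma(C)$ has the single vertex $(0,1)$ in the relevant sense), $\Gamma_+(B+tC)$ is the convex hull of $(\lambda m, t)+\RR_{\geq 0}^2$ and $(0,\lambda n + t)+\RR_{\geq 0}^2$.

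The key observation is that for this particular $B$, the pair $(X, B+tC)$ is lc \emph{if and only if} $\mathbf{1}\in \Gamma_+(B+tC)$ — that is, the converse of \eqref{pp} holds without having to change coordinates, precisely because $B$ is already in the ``nondegenerate'' binomial form and $C$ is a coordinate axis. One direction is Lemma \ref{lemma:1 in Newton}. For the converse, since $X$ is smooth and $B + tC$ is a toric (monomial) divisor, every divisorial valuation computing the minimal log discrepancy can be taken toric, i.e. of the form $E_{\mathbf e}$ for $\mathbf e = (e_1,e_2)$; then $a(E_{\mathbf e}, X, B+tC) = \langle \mathbf e, \mathbf 1\rangle - \langle \mathbf e, \Gamma(B+tC)\rangle \geq 0$ for all such $\mathbf e$ is equivalent to $\mathbf 1 \in \Gamma_+(B+tC)$ — and this holds for all valuations by an argument like that in Lemma \ref{lemma:mld=0}/Lemma \ref{mld} (one reduces via a weighted blow-up to a situation where $E$ meets the strict transform transversally at smooth points). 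So
$$\lct(X\ni 0, B; C) = \sup\{t\geq 0 \mid \mathbf 1\in \Gamma_+(B+tC)\}.$$

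It then remains to read off, from the explicit description of $\Gamma_+(B+tC)$ above, the largest $t$ for which $(1,1)$ lies in the convex hull of $(\lambda m, t) + \RR_{\geq 0}^2$ and $(0, \lambda n + t) + \RR_{\geq 0}^2$. The compact edge of this polytope joins $(\lambda m, t)$ to $(0, \lambda n + t)$; the point $(1,1)$ is on or above this edge exactly when the line through those two vertices, evaluated at $x$-coordinate $1$, is $\leq 1$. That line has the equation $\frac{x}{\lambda m} + \frac{y - t}{\lambda n} = 1$, so plugging in $(1,1)$ and solving $\frac{1}{\lambda m} + \frac{1-t}{\lambda n} \leq 1$ gives $1 - t \leq \lambda n - \frac{n}{m}$, i.e. $t \geq 1 - \lambda n + \frac{n}{m}$ fails and the threshold is exactly $t = 1 - \lambda n + \frac{n}{m}$ — one should also check the hypothesis $0\leq \lambda n - \frac nm \leq 1$ guarantees this value lies in $[0,1]$ and that the vertical/horizontal non-compact edges don't already contain $(1,1)$ for smaller $t$ (they do not, since $\lambda m, \lambda n + t > 1$ in the relevant range after accounting for the constraint, which needs a small check when $\lambda m \le 1$).

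The main obstacle I expect is justifying the converse of \eqref{pp} in this toric setting cleanly — i.e. that it suffices to test toric valuations $E_{\mathbf e}$ — rather than invoking \cite[Theorem 1]{Kaw17} as in the proof of Theorem \ref{thm:surface}. The cleanest route is to mimic the proof of Lemma \ref{mld}: since $B+tC$ is a monomial $\RR$-divisor on the smooth toric surface $\mathbb A^2$, after a single weighted blow-up $\sigma\colon Y\to X$ along the direction $(n'/g', m'/g')$ adapted to $B$, the total transform becomes simple normal crossing away from two toric points $P_1, P_2$, and at those points one recurses; every valuation that could lower the log discrepancy below $0$ is then seen to be toric, so $\mld(X\ni 0, B+tC)\geq 0 \iff a(E_{\mathbf e}, X, B+tC)\geq 0$ for all $\mathbf e \iff \mathbf 1\in\Gamma_+(B+tC)$. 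Everything after that is the elementary convex-geometry computation sketched above.
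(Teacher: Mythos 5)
Your approach is genuinely different from the paper's: the paper simply invokes the closed formula $\lct(X\ni 0,0;B+tC)=\min\{\frac{n+m}{\lambda nm+tm},\frac1t,\frac1\lambda\}$ from \cite[Proposition 3.2]{HJL22} and reads off the answer, whereas you redo the computation from scratch with Newton polytopes, which is more self-contained and fits the spirit of Section 3. The key technical point you correctly flag is the converse of \eqref{pp}: one must show $\mld(X\ni 0,B+tC)$ is computed by a toric valuation, so that $\mld\geq 0 \iff \mathbf 1\in\Gamma_+(B+tC)$. Two remarks on your treatment of this. First, calling $B+tC$ ``a toric (monomial) divisor'' is inaccurate --- $(x^m+y^n=0)$ is not torus-invariant --- so the general toric-mld fact does not apply directly; what does work is exactly the argument you propose to mimic from Lemma \ref{mld}: after the $(n',m')$-weighted blow-up $\sigma\colon Y\to X$, the strict transform $C_Y$ meets $E$ only at one of the toric points $P_1,P_2$, while $B_Y$ meets $E$ transversally away from $P_1,P_2$, so $(Y,eE+B_Y+tC_Y)$ is SNC away from $P_1,P_2$ and is an honestly toric pair $(Y,eE+tC_Y)$ near each $P_i$; hence all valuations computing the mld are toric and the Newton-polytope criterion follows. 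Second, you have a sign slip in the convex-geometry step: $(1,1)\in\Gamma_+(B+tC)$ means $\frac{1}{\lambda m}+\frac{1-t}{\lambda n}\geq 1$ (the polytope is the super-level set), not $\leq 1$; your subsequent algebra self-corrects and the threshold $t=1-\lambda n+\frac nm$ is right. One also uses $\lambda m\geq 1$ (from $\lambda n-\frac nm\geq 0$) to see the compact face is the binding one, as you note. In short: correct conclusion, a legitimate and arguably more transparent route than the paper's citation, but the ``monomial'' phrasing needs replacing by the precise Lemma~\ref{mld}-style argument.
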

\begin{proof}
By \cite[Proposition 3.2]{HJL22}, 
$$\lct(X\ni0,0;B+tC)=\min\{\frac{n+m}{\lambda nm+tn},\frac{1}{t},\frac{1}{\lambda}\}$$
for any $t\geq 0$. The lemma follows from the fact that
$$\lct(X\ni 0,B; C)=\sup\{t\geq 0\mid \lct(X\ni0,0;B+tC)\geq 1\}.$$
\end{proof}

\begin{lemma}\label{lemexa}
Let $\epsilon=\frac{p}{q}$ be a rational number such that $\frac{1}{n+1}\leq \epsilon \leq \frac{1}{n}$, where $p,q,n\in \NN$.
Let $X=\mathbb A^2$ with coordinates $x,y$. Let $C=(x=0)$ and 
$$B=\frac{1}{qn(n+1)}\cdot \left(x^{q(2n+1)-p}+y^{2qn(n+1)}=0\right)+\frac{qn-p}{q(n+1)}\cdot(x=0).$$
Then we have $$\mult_C B\leq 1-\epsilon, \quad  \mld(X\ni 0,B)=\epsilon \quad  \text{ and }\quad \lct(X\ni 0,B; C)= \frac{(2n+1)\epsilon-1}{2n(n+1)}.$$
\end{lemma}
\begin{proof}
The assertion on $\mult_C B$ can be checked directly. The assertion on the log canonical threshold follows from Lemma \ref{lct} by direct calculation. Next we will treat the assertion on the minimal log discrepancy. By Lemma \ref{mld}, we have
\begin{align*}
\mld(X\ni 0, B)&=\inf\big\{p_1+p_2-\min\{\frac{n-\epsilon}{n+1}p_1+2p_2,\frac{n+1-\epsilon}{n}p_1\}\mid p_1,p_2\in \mathbb N\big\}\\
&=\inf\big\{\max\{\frac{\epsilon+1}{n+1} p_1-p_2,\frac{\epsilon-1}{n}p_1+p_2\}\mid p_1,p_2\in \mathbb N\big\}.
\end{align*}
Denote
$$k:=\frac{(2n+1)-\epsilon}{2n(n+1)}$$
and define a piece-wise linear function $f$ on $\mathbb R_{\geq 0}^2$ by 
$$f(r_1,r_2) :=
\begin{cases}
\frac{\epsilon+1}{n+1} r_1-r_2, & \text{if $\frac{r_2}{r_1} \leq k$,}\\
\frac{\epsilon-1}{n}r_1+r_2,  & \text{if $\frac{r_2}{r_1} \geq k$,} \\
\end{cases}
$$ 
for any $r_1,r_2\in \mathbb R_{\geq 0}$. Then we have
$$\mld(X\ni 0, B)=\inf\{f(p_1,p_2)\mid p_1,p_2\in \mathbb N\}.$$
By the definition of $f$, for any $r_1,r_2,r'_1\in \mathbb R_{\geq 0}$, we have 
$$f(r_1,r_2)\leq f(r'_1,r_2)\quad  \text{ if $r_2/r_1\leq k$ and $r'_1\geq r_1$}$$
and 
$$f(r_1,r_2)\leq f(r'_1,r_2) \quad \text{ if $r_2/r_1\geq k$ and $r'_1\leq r_1$}.$$

It is easy to check that 
$$\frac{1}{n+1}\leq k\leq \frac{1}{n} \quad \text{and}\quad \frac{2}{2n+2}\leq k\leq \frac{2}{2n+1}.$$
By direct calculation, we have $f(n,1)=f(n+1,1)=\epsilon$ and
$$f(2n+1,2)=\frac{\epsilon-1}{n}(2n+1)+2\geq \epsilon
 \quad \text{ as } \epsilon\geq \frac{1}{n+1}.$$
For any $(p_1,p_2)\in \mathbb N^2$, there are the following three cases.

(1) $p_2=1$. If $p_1\leq n$, then $f(p_1,1)\geq f(n,1)=\epsilon$; otherwise $p_1\geq n+1$, then $f(p_1,1)\geq f(n+1,1)=\epsilon$.

(2) $p_2=2$. If $p_1\leq 2n+1$, then $f(p_1,2)\geq f(2n+1,2)\geq \epsilon$; otherwise $p_1\geq 2n+2$, then $f(p_1,2)\geq f(2n+2,2)=2\epsilon$.

(3) $p_2\geq 3$. Let $p'_1=p_2/k$, then by calculation
$$f(p'_1,p_2)=\frac{(2n+1)\epsilon-1}{(2n+1)-\epsilon}\cdot p_2.$$
One can check that $f(p'_1,p_2)\geq \epsilon$ when $p_2\geq 3$ and $\epsilon\geq 1/(n+1)$. So,
either $p_1\leq p'_1$ or $p_1\geq p'_1$, we have $f(p_1,p_2)\geq f(p'_1,p_2)\geq \epsilon$.

Therefore,  
$$\mld(X\ni 0, B)=\inf\{f(p_1,p_2)\mid p_1,p_2\in \mathbb N\}=\epsilon.$$
\end{proof}


\begin{thebibliography}{BCHM10}



\bibitem[AB14]{AB14} V.~Alexeev and A.~Borisov,
\textit{On the log discrepancies in toric Mori contractions}, Proc. Amer. Math. Soc. \textbf{142} (2014), no. 11, 3687--3694.

\bibitem[Amb05]{Amb05} F.~Ambro, 
\textit{The moduli b-divisor of an lc-trivial fibration}, 
Compos. Math. \textbf{141} (2005), no. 2, 385--403.


 
\bibitem[Bir12]{Bir12}
C.~Birkar, 
\textit{Existence of log canonical flips and a special LMMP}, Publ. Math. Inst. Hautes \'Etudes Sci. \textbf{115} (2012), no.~1, 325--368.

\bibitem[Bir16]{Bir16} C.~Birkar, \textit{Singularities on the base of a {F}ano type fibration}, J. Reine Angew. Math. \textbf{715} (2016), 125--142.


\bibitem[Bir19]{Bir19} C.~Birkar, \textit{Anti-pluricanonical systems on Fano varieties}, Ann. of Math. \textbf{190} (2019), no. 2, 345--463. 

\bibitem[Bir21]{Bir21}C.~Birkar, \textit{Singularities of linear systems and boundedness of {F}ano varieties}, Ann. of Math. \textbf{193} (2021), no. 2, 347--405.


\bibitem[Bir23]{Bir23}
C.~Birkar, \textit{Singularities on Fano fibrations and beyond}, arXiv:2305.18770.

\bibitem[BCHM10]{BCHM10} C.~Birkar, P.~Cascini, C.~D.~Hacon and  J.~M\textsuperscript{c}kernan, \textit{Existence of minimal models for varieties of log general type}, J. Amer. Math. Soc. \textbf{23} (2010), no. 2, 405--468.

\bibitem[BC21]{BC21} C.~Birkar and Y.~Chen, \textit{Singularities on toric fibrations}, Sb. Math. \textbf{212} (2021), no. 3, 20--38. 



\bibitem[FG14]{FG14} O. Fujino and Y. Gongyo, \textit{On the moduli b-divisors of lc-trivial fibrations}, Ann. Inst. Fourier (Grenoble) \textbf{64} (2014), no. 4, 1721--1735.

\bibitem[GLS07]{GLS07} G. M. Greuel, C. Lossen and E. I. Shustin, \textit{Introduction to Singularities
and Deformations}, Springer Science \& Business Media,
2007.

\bibitem[HJL22]{HJL22} J. Han, C. Jiang and Y. Luo, \textit{Shokurov's conjecture on conic bundles with canonical singularities}, Forum Math. Sigma \textbf{10} (2022), e38, 1--24.

\bibitem[Kaw17]{Kaw17} M.~Kawakita, {\it Divisors computing the minimal log discrepancy on a smooth surface}, Math. Proc. Cambridge Philos. Soc. \textbf{163} (2017), no. 1, 187--192.

\bibitem[Kaw97]{Kaw97} Y.~Kawamata, \textit{Subadjunction of log canonical divisors for a variety of codimension 2}, Birational algebraic geometry (Baltimore, MD, 1996), 79--88, Contemp. Math., \textbf{207}, Amer. Math. Soc., Providence, RI, 1997.


\bibitem[Kaw98]{Kaw98} Y.~Kawamata, \textit{Subadjunction of log canonical divisors, II}, Amer. J. Math. \textbf{120} (1998), no. 5, 893--899.


\bibitem[KM98]{KM98} J.~Koll\'ar and S.~Mori, \textit{Birational geometry of algebraic varieties}, Cambridge Tracts in
Math. \textbf{134}, Cambridge Univ. Press, Cambridge, 1998. 



\bibitem[MP08]{MP08} S.~Mori and Y.~G.~Prokhorov,
\textit{On $\mathbb{Q}$-conic bundles}, 
Publ. Res. Inst. Math. Sci. \textbf{44} (2008), no. 2, 315--369. 



\bibitem[MP09]{MP09} S.~Mori and Y.~G.~Prokhorov, \textit{Multiple fibers of del Pezzo fibrations}, Proc. Steklov Inst. Math. \textbf{264} (2009), no. 1, 131--145


\bibitem[PS09]{PS09} Y.~G.~Prokhorov and V.~V.~Shokurov, 
\textit{Towards the second main theorem on complements}, J. Algebraic Geom. \textbf{18} (2009), no. 1, 151--199.




\end{thebibliography}
\end{document}